\newcommand\restr[2]{{
  \left.\kern-\nulldelimiterspace 
  #1 
  \vphantom{\big|} 
  \right|_{#2} 
  }}
\def\BState{\State\hskip-\ALG@thistlm}
\numberwithin{equation}{section}
\tikzset{
	dot diameter/.store in=\dot@diameter,
	dot diameter=2pt,
	dot spacing/.store in=\dot@spacing,
	dot spacing=9pt,
	dots/.style={
		line width=\dot@diameter,
		line cap=round,
		dash pattern=on 0pt off \dot@spacing
	}
}
\newtheorem{theorem}{Theorem}[section]
\newtheorem{lem}[theorem]{Lemma}
\newtheorem{rem}[theorem]{Remark}
\title{On Sombor Index of Unicyclic graphs with a fixed number of pendant vertices}
\author{
	Joyentanuj Das\thanks{Department of Applied Mathematics, National Sun Yat-sen University, Kaohsiung City - 804, Taiwan (R.O.C.). \newline Email: joyentanuj@gmail.com, joyentanuj@math.nsysu.edu.tw}
	\and
	Yogesh Prajapaty\thanks{School of Mathematics, IISER Thiruvananthapuram, Maruthamala P.O., Vithura, Kerala- 695 551, India. \newline Email: prajapaty0916@iisertvm.ac.in}
}
\date{}
\begin{document}

\maketitle

\begin{abstract}
The Sombor index is a topological index in graph theory defined by Gutman in 2021. In this article we find the maximum Sombor index of unicyclic graphs with a fixed number of pendant vertices. We also provide the unique graph among the chosen class where the maximum Sombor index is attained.
\end{abstract}

\noindent {\sc\textsl{Keywords}:} Sombor index, unicyclic graphs, pendant vertices.

\noindent {\sc\textbf{MSC}: 05C10, 05C35, 05C38}  

\section{Introduction}
Let $G=(V(G),E(G))$  be a finite, simple, connected graph with $V(G)$ as the set of vertices and $E(G)$ as the set of edges in $G$. We simply write $G=(V,E)$ if there is no scope of confusion. We write $u\sim v$ to indicate that the vertices $u,v \in V$ are adjacent in $G$. The degree
of the vertex $v$, denoted by $d_G(v)$ (or simply $d(v)$), equals the number of vertices in $V$ that are adjacent to $v$.  A graph $H$ is said to be a subgraph of $G$ if $V(H) \subset V(G)$ and $E(H) \subset E(G)$. For any subset $S \subset V (G)$, a subgraph $H$ of $G$ is said to be an induced subgraph with vertex set $S$, if $H$ is a maximal subgraph of $G$ with vertex set $V(H)=S$.

For any two vertices $u,v \in V(G)$, we will use $d(u,v)$ to denote the distance between the vertices $u$ and $v$, i.e. the length of the shortest path connecting $u$ and $v$. For a vertex $u \in V(G)$, we use $N(u)$ to denote the set of neighbors of $u$. \textit{i.e.} the set of vertices that are adjacent to $u$. A vertex $u \in V(G)$ is said to be a pendant vertex if $d(u) = 1$. We will use $C_l,P_l$ to denote a cycle and a path of length $l$, respectively.

In~\cite{Gut1}, Gutman defined a new topological index under the name Sombor index. For a graph G, its Sombor index is defined as $$SO(G) = \sum_{u \sim v} \sqrt{d(u)^2 + d(v)^2}.$$ Note that in $SO(G)$, the summation is over all edges of $G$. This new index attracted many researchers within a short period of time and some of them were able to show applications of the Sombor index, for example see~\cite{Reti}. For articles related to Sombor index the readers can refer to \cite{Ali},\cite{Cruz}-\cite{Zhou1}. 

In graph theory studying extremal graphs and indices for a class of graphs with a given parameter is a very interesting problem. For example one may refer to \cite{Das1,Sen,Sun,Zhou,Zhou1}. In \cite{Zhou} the authors have studied the extremal Sombor index of trees and unicyclic graphs with given matching number. In \cite{Zhou1}, the authors have studied the Sombor index of trees and unicyclic graphs with a given maximum degree. In~\cite{Sen}, the authors found the maximum Sombor index of unicyclic graphs with a fixed girth. In this article we find the maximum Sombor index of unicyclic graphs with a fixed number of pendant vertices and also provide the class of graphs where the maximum Sombor index is attained.

This article is organized as follows: In Section~\ref{sec:notations}, we state the preliminary results, prove some basic inequalities and results related to trees that will be used in the later section. In Section~\ref{sec:main}, we build up the necessary tools required to prove the main result and lastly in Theorem~\ref{thm:main}, we state and prove the main result of the article.

\section{Notation and Preliminaries}\label{sec:notations}
The next two lemmas directly follows from verifying $f'(x) >0$ and $g'(x) < 0$ and these will be used again and again in the future proofs.
\begin{lem}\label{lem:f}
	For $x \ge 1$, $f(x) = \sqrt{(x+a)^2+b^2}-\sqrt{x^2+b^2}$, where $a,b$ are positive integers, then $f(x)$ is a monotonically increasing function on $x$.
\end{lem}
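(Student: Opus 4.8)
The plan is to prove that $f(x) = \sqrt{(x+a)^2+b^2} - \sqrt{x^2+b^2}$ is monotonically increasing for $x \ge 1$ by computing the derivative $f'(x)$ and showing it is strictly positive.

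First I would differentiate directly. Writing $f(x) = \sqrt{(x+a)^2+b^2} - \sqrt{x^2+b^2}$, the chain rule gives
\[
f'(x) = \frac{x+a}{\sqrt{(x+a)^2+b^2}} - \frac{x}{\sqrt{x^2+b^2}}.
\]
So the task reduces to showing that
\[
\frac{x+a}{\sqrt{(x+a)^2+b^2}} > \frac{x}{\sqrt{x^2+b^2}}
\]
for all $x \ge 1$ (indeed for all $x > 0$), where $a, b$ are positive. The cleanest way to see this is to recognize that each side has the form $g(t) = t/\sqrt{t^2+b^2}$ evaluated at $t = x+a$ and $t = x$ respectively, so it suffices to show that $g$ is strictly increasing in $t$. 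One computes
\[
g'(t) = \frac{\sqrt{t^2+b^2} - t \cdot \frac{t}{\sqrt{t^2+b^2}}}{t^2+b^2} = \frac{b^2}{(t^2+b^2)^{3/2}} > 0,
\]
using $b > 0$. Since $a > 0$ we have $x + a > x$, and monotonicity of $g$ then yields $g(x+a) > g(x)$, i.e. $f'(x) > 0$.

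Since $f'(x) > 0$ holds on the whole interval $x \ge 1$, the function $f$ is monotonically increasing there, completing the proof. I do not anticipate a genuine obstacle, as this is a routine calculus argument; the only point requiring a little care is to avoid a messy direct comparison of the two fractions and instead isolate the auxiliary function $g(t) = t/\sqrt{t^2+b^2}$, whose derivative collapses neatly to $b^2/(t^2+b^2)^{3/2}$. The positivity of $a$ and $b$ is exactly what makes both the shift $x+a > x$ and the strict inequality $g' > 0$ work.
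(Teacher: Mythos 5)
Your proof is correct and follows the same route the paper indicates: the paper simply asserts that the lemma ``directly follows from verifying $f'(x) > 0$,'' and your argument carries out exactly that verification, with the auxiliary function $g(t) = t/\sqrt{t^2+b^2}$ being a clean way to organize the computation. Nothing further is needed.
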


\begin{lem}\label{lem:g}
	For $x \ge 1$, $g(x) = \sqrt{a^2+ x^2}-\sqrt{b^2+x^2}$, where $a,b$ are positive integers and $a>b$, then $g(x)$ is a monotonically decreasing function on $x$.
\end{lem}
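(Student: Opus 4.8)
The plan is to verify directly that $g'(x) < 0$ for every $x \ge 1$, exactly as the sentence preceding the lemma suggests. First I would differentiate term by term, using $\frac{d}{dx}\sqrt{c^2 + x^2} = \frac{x}{\sqrt{c^2+x^2}}$ for a positive constant $c$, to obtain
\[
g'(x) = \frac{x}{\sqrt{a^2 + x^2}} - \frac{x}{\sqrt{b^2 + x^2}} = x\left( \frac{1}{\sqrt{a^2 + x^2}} - \frac{1}{\sqrt{b^2 + x^2}} \right).
\]

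Next I would pin down the sign of the bracketed factor. Since $a > b > 0$, squaring preserves the inequality, so $a^2 + x^2 > b^2 + x^2 > 0$, and because the square root is increasing on the positive reals we get $\sqrt{a^2 + x^2} > \sqrt{b^2 + x^2} > 0$. Passing to reciprocals reverses the inequality, whence $\frac{1}{\sqrt{a^2+x^2}} - \frac{1}{\sqrt{b^2+x^2}} < 0$. As $x \ge 1 > 0$, multiplying the strictly negative bracket by the positive factor $x$ yields $g'(x) < 0$ on the whole range. A function whose derivative is strictly negative throughout an interval is strictly decreasing there (by the mean value theorem), so $g$ is monotonically decreasing, as claimed.

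The argument is entirely routine, and I do not anticipate any genuine obstacle; the only point that demands a moment of care is the reversal of the inequality when taking reciprocals, together with the observation that it is the positivity of $x$ (rather than the specific bound $x \ge 1$) that fixes the overall sign. In fact the same computation shows $g$ is strictly decreasing on all of $(0,\infty)$, so the hypothesis $x \ge 1$ is more than is actually needed here.
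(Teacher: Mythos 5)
Your proof is correct and follows exactly the route the paper indicates: the paper offers no written-out argument beyond the remark that the lemma ``directly follows from verifying $g'(x)<0$,'' and your computation of $g'(x)$, the sign analysis via reciprocals, and the appeal to the mean value theorem carry out precisely that verification. Your closing observation that positivity of $x$ (rather than $x\ge 1$) is all that is needed is a minor but accurate sharpening.
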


The following two lemmas gives us some basic inequalities that would be required in the proofs of some of the results later in this article.
\begin{lem}\label{lem:ine1}
	Let $c,d \ge 2$ be integers then the following inequality holds true $$\sqrt{c^2 + 2^2} + \sqrt{d^2 + 2^2} \ge \sqrt{c^2 + d^2} + \sqrt{2^2+2^2}.$$
\end{lem}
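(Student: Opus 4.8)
The plan is to collapse this two–variable inequality into a single application of the monotonicity recorded in Lemma~\ref{lem:g}. The first step is purely cosmetic: since $2^2+2^2 = 8$ and $2^2 = 4$, I rewrite the claim by moving the two $d$–terms to the left and the two ``$2$''–terms to the right, so that the parameter $c$ is isolated. Subtracting $\sqrt{d^2+2^2}$ and $\sqrt{2^2+2^2}$ across, the asserted inequality is seen to be equivalent to
$$\sqrt{c^2+d^2} - \sqrt{2^2+d^2} \le \sqrt{c^2+2^2} - \sqrt{2^2+2^2}.$$

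The second step is to read both sides as values of one function of a single variable. Setting $g(x) = \sqrt{c^2+x^2} - \sqrt{2^2+x^2}$, the left-hand side above is exactly $g(d)$ and the right-hand side is $g(2)$, so the whole statement reduces to proving $g(d) \le g(2)$ for $d \ge 2$. This is precisely the kind of comparison Lemma~\ref{lem:g} is built to deliver, provided the coefficient hypothesis is met.

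The third step is a short case split on $c$, which is where the only subtlety lies. If $c \ge 3$, then taking $a = c$ and $b = 2$ we have $a > b$, so Lemma~\ref{lem:g} applies and $g$ is monotonically decreasing on $x \ge 1$; since $d \ge 2 > 1$, monotonicity gives $g(d) \le g(2)$, which is the rearranged inequality, and undoing the rearrangement recovers the claim. The degenerate case $c = 2$ must be handled by hand, because then $a = b$ and the hypothesis of Lemma~\ref{lem:g} fails; here, however, both sides of the original inequality collapse to $\sqrt{8} + \sqrt{d^2+4}$, so equality holds trivially. I do not expect a real obstacle: the inequality is symmetric in $c$ and $d$, and the underlying reason it is true is that the four radicands $c^2+4,\,d^2+4$ and $c^2+d^2,\,8$ share the same total $c^2+d^2+8$, so concavity of the square root favours the less-spread pair $\{c^2+4,\,d^2+4\}$. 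The $g$–function route via Lemma~\ref{lem:g} lets me bypass invoking concavity directly, leaving only the boundary value $c=2$ to check separately.
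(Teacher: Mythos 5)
Your proof is correct, and it takes a genuinely different route from the paper's. The paper proceeds by direct computation: it squares the inequality (both sides are positive), cancels common terms to reach $\sqrt{c^2+2^2}\,\sqrt{d^2+2^2} \ge \sqrt{c^2+d^2}\,\sqrt{2^2+2^2}$, squares again, and reduces everything to the factorization $c^2d^2 - 4(c^2+d^2) + 16 = (c^2-2^2)(d^2-2^2) \ge 0$, which is immediate for $c,d \ge 2$. You instead rearrange the claim as the single-variable comparison $g(d) \le g(2)$ for $g(x) = \sqrt{c^2+x^2} - \sqrt{2^2+x^2}$ and invoke Lemma~\ref{lem:g} with $a = c$, $b = 2$; you rightly notice that the lemma's hypothesis $a > b$ forces $c \ge 3$, and you dispose of the boundary case $c = 2$ by inspection, where both sides of the original inequality coincide. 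Both arguments are complete: your rearrangement is an exact equivalence, the monotonicity of $g$ on $x \ge 1$ gives $g(d) \le g(2)$ for $d \ge 2$ (non-strict decrease is all you need, which is precisely what Lemma~\ref{lem:g} asserts), and the $c = 2$ check is trivial. Comparing the two: the paper's double-squaring is self-contained and its factorization makes the equality case ($c = 2$ or $d = 2$) visible at a glance, whereas your version is shorter given the surrounding machinery, reuses a lemma the paper applies constantly elsewhere, and your closing remark — that the radicand pairs $\{c^2+4,\, d^2+4\}$ and $\{c^2+d^2,\, 8\}$ share the same sum with the former less spread, so concavity of the square root favours it — correctly identifies the structural reason the inequality holds, something the paper's computation leaves hidden.
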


\begin{proof}
	Squaring both sides of the inequality we have 
	$$c^2 + 2^2 + d^2 + 2^2 + 2\sqrt{c^2 + 2^2} \sqrt{d^2 + 2^2} \ge c^2 + d^2 +  2^2 + 2^2 + 2\sqrt{c^2 + d^2} \sqrt{2^2+2^2}.$$ Canceling the common terms we have $\sqrt{c^2 + 2^2} \sqrt{d^2 + 2^2} \ge \sqrt{c^2 + d^2} \sqrt{2^2+2^2}$. Again squaring both sides we have $$(c^2+2^2)(d^2 + 2^2) \ge (c^2 + d^2)(2^2+2^2),$$ which is true if and only if $c^2d^2 + 16 \ge 4(c^2 + d^2)$. Note that $$c^2d^2 - 4(c^2 + d^2) + 16 = (c^2 -2^2)(d^2 - 2^2) \ge 0,$$ since $c,d \ge 2$ and hence the inequality holds true.
\end{proof}

\begin{lem}\label{lem:ine2}
	Let $m,n \ge 1$ be integers then the following inequalities hold true:
	\begin{itemize}
		\item[(a)] $\sqrt{(m+n)^2 + 2^2} \ge \sqrt{(m+1)^2 + (n+1)^2}.$
		\item[(b)] $\sqrt{(m+n+2)^2 + 2^2} \ge \sqrt{(m+1)^2 + (n+3)^2}.$
		\item[(c)] $\sqrt{(m+n+2)^2 + 2^2} \ge \sqrt{(m+2)^2 + (n+2)^2}.$
	\end{itemize}
\end{lem}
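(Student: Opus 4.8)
The plan is to handle the three inequalities by a single elementary device. Every quantity appearing under a radical is strictly positive, so each square root is a nonnegative real number, and consequently an inequality of the form $\sqrt{A}\ge\sqrt{B}$ is equivalent to $A\ge B$. Thus in each part it suffices to square both sides, cancel the common terms, and exhibit the resulting difference of radicands as a manifestly nonnegative expression using the hypothesis $m,n\ge 1$. This is the same squaring strategy already used in the proof of Lemma~\ref{lem:ine1}, and the only real content is checking that each leftover polynomial factors favorably.

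First I would treat part (a). Squaring reduces the claim to $(m+n)^2+4\ge (m+1)^2+(n+1)^2$. Expanding both sides and subtracting, the difference $\text{LHS}-\text{RHS}$ collapses to $2(m-1)(n-1)$, which is nonnegative exactly because $m,n\ge 1$. For part (b) the reduced inequality is $(m+n+2)^2+4\ge (m+1)^2+(n+3)^2$; after expansion the cross term $2mn$ dominates and the difference factors as $2(m-1)(n+1)$, nonnegative since $m\ge 1$ and $n+1>0$. For part (c) the reduced inequality $(m+n+2)^2+4\ge (m+2)^2+(n+2)^2$ is the cleanest of the three: once the common terms are cancelled, the difference is simply $2mn$, which is strictly positive for $m,n\ge 1$. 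In each case the inequality then follows by taking square roots.

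All three computations are routine polynomial algebra, so I do not anticipate a genuine obstacle; the underlying reason the inequalities hold is that the single cross term $2mn$ arising from squaring $(m+n+\cdots)$ on the left always outweighs the extra contributions on the right. The one point demanding care is the bookkeeping in the expansions of $(m+n+2)^2$ in parts (b) and (c), where a sign slip would destroy the clean factorizations $2(m-1)(n+1)$ and $2mn$; I would double-check those expansions before concluding.
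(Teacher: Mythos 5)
Your proposal is correct and follows essentially the same route as the paper: square both sides, cancel common terms, and verify the remaining polynomial inequality using $m,n\ge 1$. Your explicit factorizations $2(m-1)(n-1)$, $2(m-1)(n+1)$, and $2mn$ all check out (the paper states the equivalent reduced inequalities, e.g.\ $m(n+1)\ge n+1$ for part (b), and skips part (c) as similar, which your computation fills in).
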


\begin{proof}
	\begin{itemize}
		\item[(a)] Squaring both sides of the inequality we have $$(m+n)^2 + 2^2 \ge (m+1)^2 + (n+1)^2.$$ Expanding the terms on both sides we have $$m^2 + 2mn + n^2+2^2 \ge m^2 + 2m + 1+n^2+2n+1.$$ Simplifying by canceling similar terms on both sides we have $mn+2 \ge m+n$, which is true and hence the result holds.
		\item[(b)] Squaring both sides of the inequality we have $$(m+n+2)^2 + 2^2 \ge (m+1)^2 + (n+3)^2 .$$ Expanding the terms on both sides we have $$m^2 + n^2 + 2^2 + 4(m+n) + 2mn + 2^2\ge  m^2 + 2m + 1 + n^2 + 6n + 3^2.$$ Simplifying by canceling similar terms on both sides we have $m(n+1)\ge n+1$, which is true and hence the result holds.
		\item[(c)] We skip the proof since it is similar to that of (a) and (b).
	\end{itemize}
\end{proof}

Let $T$ be a tree and $u, v \in V(T)$ be vertices such that $d(u) \ge 3$ and $v$ is the nearest vertex to $u$ with $d(v) \ge 3$. Let the path containing $u,v$ be $\cdots \sim u_1 \sim u \sim v \sim v_1 \sim \cdots$. Let $N(u)\setminus \{v\} = \{u_1,u_2,\cdots,u_m\}$, $N(v)\setminus \{u\} = \{v_1,v_2,\cdots,v_n\}$ and $m \ge n$.

\begin{lem}\label{lem:tree1}
	Let $T$ be a tree as given above and $T^*$ be the graph obtained from $T$ by deleting the edges $v \sim v_i$ for all $i \ge 2$ and adding the edges $u \sim v_i$ for all $i \ge 2$. Then we have $SO(T^*) > SO(T)$.
\end{lem}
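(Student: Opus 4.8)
The plan is to compute the difference $SO(T^*)-SO(T)$ directly. Since $T^*$ is obtained from $T$ only by detaching the branches rooted at $v_2,\dots,v_n$ from $v$ and reattaching them to $u$, the sole vertices whose degrees change are $u$ and $v$: in $T$ we have $d(u)=m+1$ and $d(v)=n+1$, whereas in $T^*$ we have $d_{T^*}(u)=m+n$ and $d_{T^*}(v)=2$, and every other vertex (including all the $u_i$ and $v_i$) keeps its degree. Hence only edges incident with $u$ or $v$ affect the sum, and I would organize them by their source, getting
\begin{align*}
SO(T^*)-SO(T) &= \sum_{i=1}^m\Big[\sqrt{(m+n)^2+d(u_i)^2}-\sqrt{(m+1)^2+d(u_i)^2}\Big] \\
&\quad + \Big[\sqrt{(m+n)^2+2^2}-\sqrt{(m+1)^2+(n+1)^2}\Big] \\
&\quad + \Big[\sqrt{2^2+d(v_1)^2}-\sqrt{(n+1)^2+d(v_1)^2}\Big] \\
&\quad + \sum_{i=2}^n\Big[\sqrt{(m+n)^2+d(v_i)^2}-\sqrt{(n+1)^2+d(v_i)^2}\Big],
\end{align*}
the four groups coming respectively from the edges $u\sim u_i$, the edge $u\sim v$, the edge $v\sim v_1$, and the reattached edges $u\sim v_i$ for $i\ge 2$.

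Next I would show that three of the four groups are nonnegative, isolating the single negative contribution. Because $d(v)=n+1\ge 3$ forces $n\ge 2$, the third group (the edge $v\sim v_1$, whose first endpoint's degree drops from $n+1$ to $2$) is the only negative one; the second group is $\ge 0$ by Lemma~\ref{lem:ine2}(a) applied with the present $m,n$; and each summand of the fourth group is $\ge 0$ since $m+n\ge n+1$. Thus it suffices to prove that the first group dominates the third. The crucial structural input here is that $v$ is the \emph{nearest} vertex of degree $\ge 3$ to $u$, which forces every neighbour $u_i\neq v$ of $u$ to satisfy $d(u_i)\le 2$ (otherwise there would be another degree-$\ge 3$ vertex at distance $1$); without this hypothesis the statement is in fact false.

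I would then reduce everything to one numerical inequality. Since $n\ge 2$ gives $m+n>m+1$, Lemma~\ref{lem:g} shows each first-group summand is decreasing in $d(u_i)$, so $d(u_i)\le 2$ bounds the first group below by $m\big[\sqrt{(m+n)^2+4}-\sqrt{(m+1)^2+4}\big]$. Likewise, since $n+1>2$, Lemma~\ref{lem:g} shows the magnitude of the third group, $\sqrt{(n+1)^2+d(v_1)^2}-\sqrt{2^2+d(v_1)^2}$, is decreasing in $d(v_1)$, so $d(v_1)\ge 1$ bounds it above by $\sqrt{(n+1)^2+1}-\sqrt5$. After discarding the nonnegative second and fourth groups, the lemma reduces to
\[
m\Big[\sqrt{(m+n)^2+4}-\sqrt{(m+1)^2+4}\Big] \;\ge\; \sqrt{(n+1)^2+1}-\sqrt5, \qquad m\ge n\ge 2 .
\]

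The main obstacle is this last inequality, which I expect to be the only genuine work, since the crude bound from a single summand already fails at $m=n=2$ and one must exploit all $m$ summands. I would first eliminate $m$: by Lemma~\ref{lem:f} the bracket equals $f(m+1)$ for $f(x)=\sqrt{(x+(n-1))^2+4}-\sqrt{x^2+4}$, hence is nondecreasing in $m$, and the prefactor $m$ increases while the right-hand side is independent of $m$; so the left-hand side is minimized at $m=n$, and it suffices to treat $m=n$. For $m=n$ I would rationalize the bracket as $\dfrac{(3n+1)(n-1)}{2\sqrt{n^2+1}+\sqrt{(n+1)^2+4}}$, bound the denominator by $3(n+1)$ and the right-hand side by $n+2$, thereby reducing the claim to the cubic $3n^3-5n^2-10n-6\ge 0$, which holds for $n\ge 3$; the single base case $n=2$ is then checked by direct evaluation, $2\big(2\sqrt5-\sqrt{13}\big)\approx 1.73 > \sqrt{10}-\sqrt5\approx 0.93$. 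As every intermediate bound is strict, this yields $SO(T^*)>SO(T)$, as required.
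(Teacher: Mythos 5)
Your proof is correct as you have set it up, but it takes a genuinely different route from the paper's, and the difference is instructive. The paper discards your first group (the edges $u\sim u_i$) entirely: it assumes without loss of generality that the retained neighbour satisfies $d(v_1)\ge d(v_i)$ for all $i\ge 2$, keeps only the $u\sim v$ term, the single moved edge $u\sim v_2$, and the $v\sim v_1$ loss, and chains Lemma~\ref{lem:f} (with $a=m-1$, $b=d(v_2)$), the hypothesis $m\ge n$, and Lemma~\ref{lem:g} (with $a=n+1$, $b=2$) to show the $v_2$-gain strictly exceeds the $v_1$-loss, with Lemma~\ref{lem:ine2}(a) making the $u\sim v$ term nonnegative. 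That argument uses nothing about the degrees $d(u_i)$, so it tolerates ties in ``nearest'' (another degree-$\ge 3$ neighbour of $u$ at the same distance), which is the situation the downstream applications (Lemma~\ref{lem:uni1}(a), Theorem~\ref{thm:main}) can actually present; the price is the WLOG choice of which neighbour of $v$ survives. You instead keep $v_1$ arbitrary (worst case a pendant, $d(v_1)=1$) and pay the loss with the aggregate gain over all $m$ edges $u\sim u_i$, reducing to the cubic $3n^3-5n^2-10n-6\ge 0$; this buys a stronger, labelling-free conclusion, but it forces you to read ``the nearest'' as \emph{unique} nearest so that $d(u_i)\le 2$ --- a structural claim the paper never needs. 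Your falsity remark is accurate precisely in your setting: with $m=n=2$, $d(v_1)=1$ and $d(u_1)=d(u_2)=d(v_2)=D\to\infty$, the retained-pendant move has difference tending to $\sqrt{20}-\sqrt{18}+\sqrt{5}-\sqrt{10}<0$, so one of the two fixes (your degree bound, or the paper's WLOG) is indispensable --- but note that under the paper's WLOG the lemma is true with no constraint on the $u_i$. Your numerical work checks out: the rationalized numerator $(3n+1)(n-1)$, the denominator bound $2\sqrt{n^2+1}+\sqrt{(n+1)^2+4}\le 3(n+1)$, the cubic holding for $n\ge 3$ with equality at $n=3$, and the base case $n=2$. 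One small inaccuracy: it is not true that ``every intermediate bound is strict'' (several are only $\ge$, and the cubic degenerates to equality at $n=3$); strictness of the conclusion is instead guaranteed by the discarded strictly positive contributions, e.g.\ Lemma~\ref{lem:ine2}(a) is strict for integers $m,n\ge 1$ since equality would need $(m-1)(n-1)=-1$.
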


\begin{proof}
	Without loss of generality we can assume that $d(v_1) \ge d(v_i)$ for all $i \ge 2$. We prove the result by showing that $SO(T^*) - SO(T) > 0$. Note that 
	\begin{align*}
		SO(T^*) - SO(T) &> \sqrt{(m+n)^2 + 2^2} - \sqrt{(m+1)^2 + (n+1)^2} \\
						&\ \ +\sqrt{(m+n)^2 + d(v_2)^2} - \sqrt{(n+1)^2 + d(v_2)^2}\\
						&\ \ +\sqrt{2^2 + d(v_1)^2} - \sqrt{(n+1)^2 + d(v_1)^2}
	\end{align*}
Using Lemma~\ref{lem:f} and substituting $a = m-1$ and $b = d(v_2)$ we have 
$$\sqrt{(m+n)^2 + d(v_2)^2} - \sqrt{(n+1)^2 + d(v_2)^2} > \sqrt{(m+1)^2 + d(v_2)^2} - \sqrt{2^2 + d(v_2)^2},$$ since $f(n+1) > f(2)$. Next, using the condition that $m \ge n$ we have $$\sqrt{(m+1)^2 + d(v_2)^2} - \sqrt{2^2 + d(v_2)^2} \ge \sqrt{(n+1)^2 + d(v_2)^2} - \sqrt{2^2 + d(v_2)^2}.$$ Finally using Lemma~\ref{lem:g} and substituting $a = n+1$ and $b = 2$ we have $$\sqrt{(n+1)^2 + d(v_2)^2} - \sqrt{2^2 + d(v_2)^2} \ge \sqrt{(n+1)^2 + d(v_1)^2} - \sqrt{2^2 + d(v_1)^2},$$ since $g(d(v_2)) \ge g(d(v_1))$. Thus combining the above inequalities we have $$\sqrt{(m+n)^2 + d(v_2)^2} - \sqrt{(n+1)^2 + d(v_2)^2} > \sqrt{(n+1)^2 + d(v_1)^2} - \sqrt{2^2 + d(v_1)^2}.$$ It follows from Lemma~\ref{lem:ine2} that $\sqrt{(m+n)^2 + 2^2} \ge  \sqrt{(m+1)^2 + (n+1)^2}$. Hence we have $SO(T^*) - SO(T) > 0$ and the result follows.
\end{proof}

Let $T$ be a tree and $u, v \in V(T)$ be vertices such that $d(u) \ge 3$ and $v$ is the nearest vertex to $u$ with $d(v) \ge 3$. Let the path containing $u,v$ be $\cdots \sim u_1 \sim u \sim u_2 \sim \cdots \sim v_2 \sim v \sim v_1 \sim \cdots$ and $N(u)\setminus \{v\} = \{u_1,u_2,\cdots,u_m\}$, $N(v)\setminus \{u\} = \{v_1,v_2,\cdots,v_n\}$ and $m \ge n$.

\begin{lem}\label{lem:tree2}
	Let $T$ be a tree as given above and $T^*$ be the graph obtained from $T$ by deleting the edges $v \sim v_i$ for all $i \ge 3$ and adding the edges $u \sim v_i$ for all $i \ge 3$. Then we have $SO(T^*) > SO(T)$.
\end{lem}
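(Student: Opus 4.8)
The plan is to mirror the strategy of Lemma~\ref{lem:tree1}: write $SO(T^*)-SO(T)$ as a sum of edge-contribution changes, isolate two positive ``gain'' terms large enough to dominate the two negative ``loss'' terms, and verify each pairing using only Lemmas~\ref{lem:f} and~\ref{lem:g}. First I would record the degree changes. Since $u,v$ are now non-adjacent, $d_T(u)=m$ and $d_T(v)=n$, and the operation adds the $n-2$ edges $u\sim v_i$ $(i\ge 3)$ while deleting $v\sim v_i$ $(i\ge 3)$; hence $d_{T^*}(u)=m+n-2$ and $d_{T^*}(v)=2$, while every $d(v_i)$ and $d(u_j)$ is unchanged. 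Write $D=m+n-2$. Because $v$ is the nearest vertex of degree $\ge 3$ to $u$, the neighbours $u_2,v_2$ on the $u$–$v$ path have degree $2$; this stays valid in the degenerate case $u_2=v_2$ (i.e. $d(u,v)=2$), since $u\sim u_2$ and $v\sim v_2$ remain distinct edges. As in Lemma~\ref{lem:tree1} I would assume without loss of generality that $d(v_1)\ge d(v_i)$ for all $i\ge 3$, i.e. that the retained neighbour $v_1$ has the largest degree among the free neighbours of $v$.

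Only edges incident to $u$ or $v$ change, so collecting the contributions gives
\begin{align*}
SO(T^*)-SO(T) &= \sum_{j=1}^{m}\left[\sqrt{D^2+d(u_j)^2}-\sqrt{m^2+d(u_j)^2}\right]
+\sum_{i=3}^{n}\left[\sqrt{D^2+d(v_i)^2}-\sqrt{n^2+d(v_i)^2}\right]\\
&\quad +\left[\sqrt{2^2+d(v_1)^2}-\sqrt{n^2+d(v_1)^2}\right]
+\left[\sqrt{2^2+2^2}-\sqrt{n^2+2^2}\right],
\end{align*}
where the last two brackets are the losses (using $d(v_2)=2$), and every summand in the two sums is nonnegative because $D\ge m$ and $D\ge n$ (as $m,n\ge 3$). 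I would discard all summands except the $j=2$ term $\sqrt{D^2+2^2}-\sqrt{m^2+2^2}$ (using $d(u_2)=2$) and the $i=3$ term, reducing the claim to showing that the two resulting pairs are each positive.

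For the pair involving $v_2$, set $f(x)=\sqrt{(x+(n-2))^2+2^2}-\sqrt{x^2+2^2}$, which is admissible since $n-2\ge 1$; then the $u\sim u_2$ gain equals $f(m)$ and the magnitude of the $v_2$ loss equals $f(2)$, so Lemma~\ref{lem:f} gives $f(m)>f(2)$ (as $m\ge 3>2$), settling this pair in one line. For the pair involving $v_1$ and the moved vertex $v_3$, I would chain the monotonicities exactly as in Lemma~\ref{lem:tree1}: Lemma~\ref{lem:f} with $a=m-2$, $b=d(v_3)$ yields $\sqrt{D^2+d(v_3)^2}-\sqrt{n^2+d(v_3)^2}>\sqrt{m^2+d(v_3)^2}-\sqrt{2^2+d(v_3)^2}$; the hypothesis $m\ge n$ then replaces $m$ by $n$; and Lemma~\ref{lem:g} with $a=n$, $b=2$ together with $d(v_1)\ge d(v_3)$ converts the exponent into $d(v_1)$, producing a quantity $\ge \sqrt{n^2+d(v_1)^2}-\sqrt{2^2+d(v_1)^2}$, which is precisely the magnitude of the $v_1$ loss. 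Adding the two strictly positive pairs to the discarded nonnegative terms gives $SO(T^*)-SO(T)>0$.

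I expect the only real obstacle to be bookkeeping rather than analysis: one must check that the edge-change identity is exact (no edge double-counted, the internal path vertices genuinely of degree $2$, and the coincidence $u_2=v_2$ handled), after which both inequalities drop straight out of Lemmas~\ref{lem:f} and~\ref{lem:g}. Notably, unlike Lemma~\ref{lem:tree1}, this argument seems not to require Lemmas~\ref{lem:ine1}–\ref{lem:ine2} at all, since the ``path-edge'' pair $u\sim u_2$ versus $v\sim v_2$ is already captured by a single application of Lemma~\ref{lem:f}.
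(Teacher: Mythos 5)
Your proposal is correct and follows essentially the same route as the paper's proof: the identical decomposition of $SO(T^*)-SO(T)$ into the $u\sim u_2$ gain versus the $v\sim v_2$ loss (Lemma~\ref{lem:f} with $a=n-2$, $b=2$) and the moved-edge gain versus the $v\sim v_1$ loss (Lemma~\ref{lem:f} with $a=m-2$, $b=d(v_3)$, then $m\ge n$, then Lemma~\ref{lem:g}), with the remaining nonnegative terms discarded. Your only additions --- writing the exact edge-contribution identity before discarding terms and handling the coincidence $u_2=v_2$ inline (which the paper defers to a separate remark) --- are cosmetic refinements of the same argument.
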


\begin{proof}
	Observe that, since $u_2,v_2$ lies on the path between the vertices $u$ and $v$ we have $d(u_2) = d(v_2) = 2$. Without loss of generality we can assume that $d(v_1) \ge d(v_i)$ for all $i \ge 3$. We prove the result by showing that $SO(T^*) - SO(T) > 0$. Note that
	\begin{align*}
		SO(T^*) - SO(T) &> \sqrt{(m+n-2)^2 + 2^2} - \sqrt{m^2 + 2^2} \\
		&\ \ +\sqrt{2^2 + 2^2} - \sqrt{n^2 + 2^2}\\
		&\ \ +\sqrt{(m+n-2)^2 + d(v_3)^2} - \sqrt{n^2 + d(v_3)^2}\\
		&\ \ +\sqrt{2^2 + d(v_1)^2} - \sqrt{n^2 + d(v_1)^2}
	\end{align*}
Using Lemma~\ref{lem:f} and substituting $a = n-2$ and $b = 2$ we have  $$\sqrt{(m+n-2)^2 + 2^2} - \sqrt{m^2 + 2^2} > \sqrt{n^2 + 2^2} - \sqrt{2^2 + 2^2},$$ since $f(m) > f(2)$. Again using Lemma~\ref{lem:f} and substituting $a = m-2$ and $b = d(v_3)$ we have $$\sqrt{(m+n-2)^2 + d(v_3)^2} - \sqrt{n^2 + d(v_3)^2} > \sqrt{m^2 + d(v_3)^2} - \sqrt{2^2 + d(v_3)^2},$$ since $f(n) > f(2)$. Next, using the condition that $m \ge n$ we have $$\sqrt{m^2 + d(v_3)^2} - \sqrt{2^2 + d(v_3)^2} \ge \sqrt{n^2 + d(v_3)^2} - \sqrt{2^2 + d(v_3)^2}.$$ Finally, using Lemma~\ref{lem:g} and substituting $a = n$ and $b = 2$ we have $$\sqrt{n^2 + d(v_3)^2} - \sqrt{2^2 + d(v_3)^2} \ge \sqrt{n^2 + d(v_1)^2} - \sqrt{2^2 + d(v_1)^2},$$ since $g(d(v_3)) \ge g(d(v_1))$. Thus combining the above inequalities we have $SO(T^*) - SO(T) > 0$ and the result follows.
\end{proof}

\begin{rem}
	If in Lemma~\ref{lem:tree2} we have $u_2 = v_2$, i.e. $u_2$ and $v_2$ are identical vertices then also the result is true and a similar argument gives us the result.
\end{rem}

\section{Main Results}\label{sec:main}
Let $\mathcal{U}(N,k)$ be the class of unicyclic graphs on $N$ vertices with $k$ pendant vertices. Note that since there are $k$ pendant vertices we have $N \ge k+3$. Let $U_3(N,k)$ be the class of unicyclic graphs on $N$ vertices with $k$ pendant vertices where the cycle is of length $3$, say $C_3$ and there are $k$ paths attached to a single vertex of $C_3$.

In this section we find the maximum Sombor index of the class of graphs $\mathcal{U}(N,k)$. Next we prove two lemmas which shows how Sombor index changes if we remove an edge from a path and add it to a different path where both paths are attached to a single vertex of the cycle.

Let $G \in \mathcal{U}(N,k)$ and $P_l,P_m$ be two paths of length $l \ge 3$ and $m \ge 2$ attached to a single vertex of $G$ on the cycle. Let $v_l$ be the pendant vertex of $P_l$ with $v_l \sim v_{l-1} \sim v_{l-2}\in P_l$ and $u_m$ be the pendant vertex of $P_m$ with $u_m \sim u_{m-1}$.

\begin{lem}\label{lem:U3}
	Let $G$ defined be as above and $G^*$ be the graph obtained from $G$ by deleting the edge $v_l \sim v_{l-1}$ and adding the edge $u_m \sim v_l$, then $SO(G) = SO(G^*)$.
\end{lem}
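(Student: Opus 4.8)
The plan is to show that $G$ and $G^*$ differ only in the contributions of a handful of edges near the pendant ends of the two paths, and that these contributions match term for term, so that $SO(G)=SO(G^*)$.

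First I would record how the degrees change under the described surgery. Deleting $v_l\sim v_{l-1}$ and adding $u_m\sim v_l$ keeps $v_l$ a pendant vertex (so $d(v_l)=1$ in both graphs), lowers $d(v_{l-1})$ from $2$ to $1$, and raises $d(u_m)$ from $1$ to $2$; every remaining vertex keeps its degree. The role of the hypotheses $l\ge 3$ and $m\ge 2$ is precisely to guarantee that $v_{l-2}$ and $u_{m-1}$ are interior vertices of their respective pendant paths, so that $d(v_{l-2})=d(u_{m-1})=2$ in both $G$ and $G^*$; this common value is what forces the cancellation below.

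Second, I would isolate the edges whose contribution to the Sombor sum can differ between $G$ and $G^*$. Because degrees change only at $v_{l-1}$ and $u_m$, and the single edge at $v_l$ is merely rerouted, the affected edges are exactly $v_{l-1}\sim v_l$, $v_{l-2}\sim v_{l-1}$, and $u_{m-1}\sim u_m$ in $G$, whereas in $G^*$ they are $v_{l-2}\sim v_{l-1}$, $u_{m-1}\sim u_m$, and the new edge $u_m\sim v_l$; all other edges contribute identically. Summing the three relevant terms in $G$ gives
\[
\sqrt{2^2+1^2}+\sqrt{2^2+2^2}+\sqrt{2^2+1^2},
\]
using $d(v_{l-1})=2$ and $d(u_m)=1$, while summing the three relevant terms in $G^*$ gives
\[
\sqrt{2^2+1^2}+\sqrt{2^2+2^2}+\sqrt{2^2+1^2},
\]
using $d(v_{l-1})=1$ and $d(u_m)=2$. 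These two sums are term-by-term equal, so the affected contributions coincide and hence $SO(G)=SO(G^*)$.

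The argument is essentially bookkeeping, so I expect no genuine obstacle beyond correctly tracking the degrees; the only point requiring care is the boundary situation $l=3$ or $m=2$, where $v_{l-2}=v_1$ and $u_{m-1}=u_1$ are the path vertices adjacent to the cycle vertex. There one must confirm that these vertices still have degree $2$ --- which they do, being adjacent to the cycle vertex and to exactly one path neighbor --- so that the contribution $\sqrt{2^2+2^2}$ and the copies of $\sqrt{2^2+1^2}$ are indeed the correct values. As a sanity check, note that this surgery turns $v_{l-1}$ into a new pendant vertex while removing the pendant status of $u_m$, so the number of pendant vertices is preserved and $G^*$ remains in $\mathcal{U}(N,k)$.
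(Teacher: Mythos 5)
Your proof is correct and follows essentially the same route as the paper: both isolate the three affected edges and verify that their Sombor contributions, $\sqrt{2^2+1}+\sqrt{2^2+2^2}+\sqrt{2^2+1}$, are identical in $G$ and $G^*$. Your write-up is in fact slightly more careful than the paper's, since you explicitly track the degree changes at $v_{l-1}$ and $u_m$ and check the boundary cases $l=3$ and $m=2$.
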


\begin{proof}
	We prove the result by showing that $SO(G) - SO(G^*) = 0$. Since $P_l$ and $P_m$ has length at least $3$ and $2$, respectively we have that $d(v_{l-2}) = d(v_{l-1}) = 2$ and $d(v_{m-1}) = 2$. Note that 
	\begin{align*}
		SO(G) - SO(G^*) &= \sqrt{2^2+1} + \sqrt{2^2 + 2^2} + \sqrt{2^2+1}\\
		&\ \ -\sqrt{2^2+2^2} - \sqrt{2^2 + 1} - \sqrt{2^2+1} = 0
	\end{align*} and the result follows.
\end{proof}

Let $G \in \mathcal{U}(N,k)$ and $P_l,P_m$ be two paths of length $l = 2$ and $m \ge 2$ attached to a single vertex $u \in V(G)$ on the cycle. Let $v_l$ be the pendant vertex of $P_l$ with $v_l \sim v_{l-1}\in P_l$ and $u_m$ be the pendant vertex of $P_m$ with $u_m \sim u_{m-1}$.

\begin{lem}\label{lem:U4}
	Let $G$ be defined as above and $G^*$ be the graph obtained from $G$ by deleting the edge $v_l \sim v_{l-1}$ and adding the edge $u_m \sim v_l$, then $SO(G^*) > SO(G)$.
\end{lem}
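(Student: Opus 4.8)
The plan is to prove $SO(G^*) - SO(G) > 0$ by tracking precisely which edge weights change when the pendant $v_l$ is detached from the short path $P_l$ and appended to the pendant end of $P_m$. First I would record the degree changes caused by deleting $v_l \sim v_{l-1}$ and adding $u_m \sim v_l$: the degree of $u$ is unaffected, the vertex $v_{l-1} = v_1$ drops from degree $2$ to degree $1$ (it becomes the new pendant of a length-$1$ path), and $u_m$ rises from degree $1$ to degree $2$. Crucially, since $u$ lies on the cycle and carries the two paths $P_l$ and $P_m$, its degree satisfies $D := d(u) \ge 4$.

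Next I would assemble the difference edge by edge. The deleted edge $v_{l-1} \sim v_l$ and the added edge $u_m \sim v_l$ both join a degree-$2$ vertex to a degree-$1$ vertex, so each contributes $\sqrt{2^2 + 1^2}$ and they cancel. The only genuine changes are the edge $u \sim v_1$, whose weight falls from $\sqrt{D^2 + 2^2}$ to $\sqrt{D^2 + 1^2}$, and the edge $u_{m-1} \sim u_m$, whose weight rises from $\sqrt{2^2 + 1^2}$ to $\sqrt{2^2 + 2^2}$. This yields
$$SO(G^*) - SO(G) = \left(\sqrt{2^2+2^2} - \sqrt{1^2 + 2^2}\right) - \left(\sqrt{D^2 + 2^2} - \sqrt{D^2 + 1^2}\right).$$

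The apparent obstacle is that the second parenthesis is positive and is being subtracted, so the inequality is not termwise and I must control its size. The clean way around this is to recognize both parentheses as values of the single function $g(x) = \sqrt{2^2 + x^2} - \sqrt{1^2 + x^2}$ from Lemma~\ref{lem:g} (taking $a = 2$, $b = 1$): the expression above is exactly $g(2) - g(D)$. Since $g$ is monotonically decreasing and $D \ge 4 > 2$, we get $g(2) > g(D)$, hence $SO(G^*) - SO(G) > 0$. I would close by noting that this is precisely where the hypothesis that $u$ is a cycle vertex enters — for an ordinary degree-$2$ vertex one would have $g(2) - g(2) = 0$, recovering the equality of Lemma~\ref{lem:U3}, so it is the strict inequality $D > 2$ forced by $u$ lying on the cycle that makes the Sombor index strictly increase.
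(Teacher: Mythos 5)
Your proof is correct and takes essentially the same route as the paper: both arguments reduce the change to the exact identity $SO(G^*)-SO(G)=\sqrt{d(u)^2+1}+\sqrt{2^2+2^2}-\sqrt{d(u)^2+2^2}-\sqrt{2^2+1}$, after observing that the deleted edge $v_{l-1}\sim v_l$ and the added edge $u_m\sim v_l$ both weigh $\sqrt{2^2+1^2}$ and cancel, and that $u$ on the cycle with two attached paths has $d(u)\ge 4$. The only difference is cosmetic, in the closing step: you invoke Lemma~\ref{lem:g} with $a=2$, $b=1$ to get $g(2)>g(d(u))$, whereas the paper checks the equivalent inequality $8(d(u)^2+1)>5(d(u)^2+4)$ directly (valid by squaring since $8+1=5+4$); both are sound, and your remark that only $d(u)>2$ is actually needed is accurate.
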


\begin{proof}
	We prove the result by showing that $SO(G) - SO(G^*) > 0$. Since $l = 2$ and $m \ge 2$ we have $d(v_{l-1}) = 2$ and $d(v_{m-1}) = 2$. Also observe that since $u$ belongs to the cycle and has at least two paths attached to it $d(u) \ge 4$. Thus we have
		\begin{align*}
		SO(G^*) - SO(G) &= \sqrt{d(u)^2+1} + \sqrt{2^2 + 2^2} -\sqrt{d(u)^2+2^2} - \sqrt{2^2 + 1}\\
		&= \sqrt{d(u)^2+1} + \sqrt{8} -\sqrt{d(u)^2+4} - \sqrt{5} > 0.
	\end{align*} The last inequality follows from the observation that $8(d(u)^2+1) > 5(d(u)^2+4)$ since $d(u) \ge 4$ and the result follows.
\end{proof}

Let $\mathcal{G} \in U_3(N,k) \subset \mathcal{U}(N,k)$ be the graph that contains a cycle of length $3$ along with $k-1$ pendant edges and a path of length $N-k-2$ attached to a single vertex of the cycle. For reference one can refer to Figure~\ref{fig:unicyclic}.

\begin{figure}[ht]
	\centering
	\begin{tikzpicture}
		\draw[fill=black] (3,-2) circle (2pt);
		\draw[fill=black] (1,-2) circle (2pt);
		\draw[fill=black] (0,0) circle (2pt);
		\draw[fill=black] (1,2) circle (2pt);
		\draw[fill=red] (2,0) circle (3pt);
		\draw[fill=black] (3,0) circle (2pt);
		\draw[fill=black] (4,0) circle (2pt);
		\draw[fill=black] (7,0) circle (2pt);
		\draw[fill=black] (8,0) circle (2pt);
		
		\draw[thick] (0,0) -- (1,0) -- (2,0) -- (4,0);
		\draw[thick] (1,-2) -- (2,0) -- (3,-2); 
		\draw[thick] (7,0) -- (8,0);
		\draw[thick] (2,0) -- (1,2) -- (0,0);
		
		\node at (2,-2.3) {$k-1$};
		\node at (5.5,-0.3) {$N-k-2$};

		\draw [dots]  (4,0) -- (7,0);
		\draw [dots]  (3,-2) -- (1,-2);			
	\end{tikzpicture}
\caption{$k-1$ pendant edges and a path of length $N-k-2$ attached to a single vertex of a $C_3$.} \label{fig:unicyclic}
\end{figure}
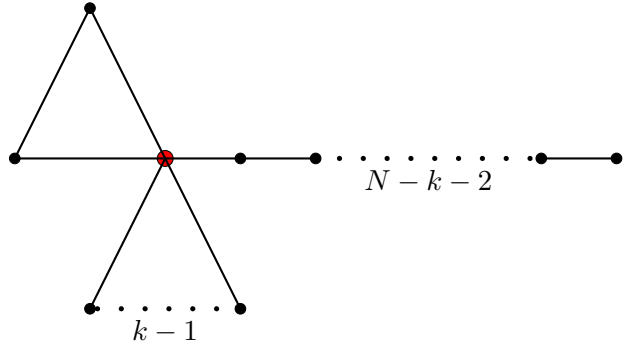

From the Figure~\ref{fig:unicyclic}, one can observe that the degree of the vertex(colored red) on $C_3$ where the pendant edges and the path is attached has degree $k+2$. In the next remark we state the value of $SO(\mathcal{G})$.
\begin{rem}
	Let $\mathcal{G} \in U_3(N,k) \subset \mathcal{U}(N,k)$ be defined as above, then
	$$SO(\mathcal{G}) =  \begin{cases}
		(N-k-3) \sqrt{2^2 + 2^2} + (k-1) \sqrt{(k+2)^2 + 1} + 3 \sqrt{(k+2)^2 + 2^2} + \sqrt{2^2 + 1}& \text{ if } N \ge k+ 4\\
		k \sqrt{(k+2)^2 + 1} + 2 \sqrt{(k+2)^2 + 2^2} + \sqrt{2^2 + 2^2}& \text{ if } N = k+ 3.
	\end{cases}$$
	
\end{rem}

Let $G \in \mathcal{U}(N,k)$ be such that it contains a cycle $C_l$. Let $x \sim u \sim y$ be vertices on the cycle and $T$ be a tree that is attached to $u$ with $u \sim v$ where $v \in T$ and $d(v) \ge 3$. Let $N(u) = \{x,y,v\} \cup \{u_1,u_2,\cdots,u_n\}$ and $N(v) = \{u\} \cup \{v_1,v_2,\cdots,v_m\}$. Let $T^*$ be the tree component containing $v$ that is obtained by deleting the edge $u \sim v$. Observe that by Lemmas~\ref{lem:tree1} and \ref{lem:tree2} we can conclude that $v$ is the only vertex in $T^*$ that has degree $\ge 3$.

\begin{lem}\label{lem:uni1}
	Let $G \in \mathcal{U}(N,k)$ be the graph as defined above.  Then we have the following:
	\begin{itemize}
		\item[(a)] If $d(u) \ge d(v)$, let $G^*$ be the graph obtained from $G$ by deleting the edges $v \sim v_i$ and adding the edges $u \sim v_i$ for all $i \ge 2$, then we have $SO(G^*) > SO(G)$.
		\item[(b)] If  $d(v) > d(u)$, let $G^*$ be the graph obtained from $G$ by deleting the edges $u \sim u_i$, $u \sim x$ and adding the edges $v \sim u_i$, $v \sim x$ for all $i \ge 1$. Then $SO(G^*) > SO(G)$.
	\end{itemize}
\end{lem}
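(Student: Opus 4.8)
The plan is to evaluate $SO(G^*)-SO(G)$ by direct bookkeeping, using that every edge both of whose endpoints keep their degree cancels, so that only the edges incident to $u$ or to $v$ contribute. Throughout set $d(u)=n+3$ and $d(v)=m+1$; in either part one of these two vertices becomes a degree-$2$ vertex while the other rises to degree $m+n+2$.

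In part (a) we have $d(u)\ge d(v)$, i.e. $n+2\ge m$, and after the move $d_{G^*}(u)=m+n+2$, $d_{G^*}(v)=2$. The edges $u\sim x$, $u\sim y$, $u\sim u_i$ and the relocated edges $u\sim v_i$ ($i\ge2$) each only see an endpoint degree increase, hence contribute nonnegative terms, while the single edge $v\sim v_1$ is the only loss. Assuming $d(v_1)\ge d(v_i)$ for $i\ge2$ and discarding the nonnegative terms, the remaining estimate is identical to the three-term inequality proved in Lemma~\ref{lem:tree1}: viewing $x$ and $y$ as two further non-$v$ neighbours of $u$, part (a) is exactly an instance of Lemma~\ref{lem:tree1} under the relabelling (tree) $m\mapsto n+2$, (tree) $n\mapsto m$, with the hypothesis $m\ge n$ of that lemma becoming our hypothesis $n+2\ge m$. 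Thus part (a) follows verbatim from the chain of Lemmas~\ref{lem:f},~\ref{lem:g} and~\ref{lem:ine2} used there.

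Part (b) is the substantive case. Here $d(v)>d(u)$, i.e. $m\ge n+3$, and after the move $d_{G^*}(u)=2$, $d_{G^*}(v)=m+n+2$. Writing out $SO(G^*)-SO(G)$, the relocated edges $u\sim x\to v\sim x$ and $u\sim u_i\to v\sim u_i$ gain, the edges $v\sim v_j$ gain, and the edge $u\sim v$ contributes $\sqrt{(m+n+2)^2+2^2}-\sqrt{(n+3)^2+(m+1)^2}\ge0$, since the difference of the two radicands equals $2(m-1)(n+1)\ge0$ (this is Lemma~\ref{lem:ine2}(b) when $n\ge1$). The only negative term is the one from the surviving cycle edge $u\sim y$, namely $-N_y$ with $N_y=\sqrt{(n+3)^2+d(y)^2}-\sqrt{2^2+d(y)^2}$. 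As there is at least one strictly positive term $v\sim v_j$ (recall $m\ge3$), it suffices to show that the single gain $P_x=\sqrt{(m+n+2)^2+d(x)^2}-\sqrt{(n+3)^2+d(x)^2}$ from $u\sim x\to v\sim x$ already dominates $N_y$. Since $x,y$ are interchangeable labels for the two cycle-neighbours of $u$, I may assume $d(x)\le d(y)$; Lemma~\ref{lem:g} (with $a=n+3$, $b=2$) then gives $N_y\le\sqrt{(n+3)^2+d(x)^2}-\sqrt{2^2+d(x)^2}$, so that $P_x\ge N_y$ reduces to $\sqrt{(m+n+2)^2+d(x)^2}+\sqrt{2^2+d(x)^2}\ge2\sqrt{(n+3)^2+d(x)^2}$. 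This last inequality is immediate from convexity of $c\mapsto\sqrt{c^2+d(x)^2}$ together with $\tfrac12\big((m+n+2)+2\big)\ge n+3$, which holds because $m\ge n+2$ (equivalently, it is one application of Minkowski's inequality to the vectors $(m+n+2,d(x))$ and $(2,d(x))$). Hence $SO(G^*)-SO(G)\ge(P_x-N_y)+(\text{strictly positive }v\sim v_j\text{ terms})>0$.

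The main obstacle is precisely part (b): unlike part (a) it does not reduce to a tree lemma, because draining $u$ down to degree $2$ genuinely penalises the surviving cycle edge $u\sim y$, and that loss must be paid back. The two ideas that resolve it are the freedom to move the lower-degree cycle-neighbour (so that $N_y$ is controlled through $d(x)$ by Lemma~\ref{lem:g}) and the observation that $n+3$ lies below the mean of $m+n+2$ and $2$, which reduces the final step to a one-line convexity estimate.
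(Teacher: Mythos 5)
Your proposal is correct and takes essentially the same route as the paper: part (a) is handled exactly as the paper does (reduction to Lemma~\ref{lem:tree1} under the relabelling you give), and in part (b) you use the same decomposition into edge contributions, the same normalisation $d(y)\ge d(x)$, Lemma~\ref{lem:g} to bound the loss $N_y$ on the edge $u\sim y$ by the corresponding quantity at $d(x)$, and the inequality $\sqrt{(m+n+2)^2+2^2}\ge\sqrt{(n+3)^2+(m+1)^2}$ for the edge $u\sim v$ (the paper cites Lemma~\ref{lem:ine2}(b); your direct radicand computation $2(m-1)(n+1)\ge 0$ is equivalent and even covers $n=0$). The only local difference is that where the paper derives $P_x \ge \sqrt{(n+3)^2+d(x)^2}-\sqrt{2^2+d(x)^2}$ by chaining Lemma~\ref{lem:f} with the hypothesis $m>n+2$, you get the same bound in one step via midpoint convexity (Minkowski); both arguments are valid.
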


\begin{proof}
	\begin{itemize}
		\item[(a)] We skip the proof since the argument follows similarly as in Lemma~\ref{lem:tree1}.
		\item[(b)] Without loss of generality we assume that $d(y) \ge d(x)$. Since $d(v) > d(u)$ we have $m > n+2$. We complete the proof by showing $SO(G^*) - SO(G) > 0$. Note that
		\begin{align*}
			SO(G^*) - SO(G)  &> \sqrt{2^2 + d(y)^2} - \sqrt{(n+3)^2 + d(y)^2}\\
			&\ \ +\sqrt{(m+n+2)^2 + d(x)^2} - \sqrt{(n+3)^2 + d(x)^2}\\
			&\ \ +\sqrt{(m+n+2)^2 + 2^2} - \sqrt{(n+3)^2 + (m+1)^2}.
		\end{align*}
		Using Lemma~\ref{lem:f} and substituting $a = m-1$ and $b = d(x)$ we have $$\sqrt{(m+n+2)^2 + d(x)^2} - \sqrt{(n+3)^2 + d(x)^2} > \sqrt{(m+1)^2 + d(x)^2} - \sqrt{2^2 + d(x)^2},$$ since $f(n+3) > f(2)$. Since $m > n+2$ we have $$\sqrt{(m+1)^2 + d(x)^2} - \sqrt{2^2 + d(x)^2} > \sqrt{(n+3)^2 + d(x)^2} - \sqrt{2^2 + d(x)^2}.$$ Next using Lemma~\ref{lem:g}, substituting $a = n+3$ and $b = 2$ and the fact $d(y) \ge d(x)$ we have $$\sqrt{(n+3)^2 + d(x)^2} - \sqrt{2^2 + d(x)^2} \ge \sqrt{(n+3)^2 + d(y)^2} - \sqrt{2^2 + d(y)^2},$$ since $g(d(x)) \ge g(d(y))$. Thus combining the inequalities we have $$\sqrt{(m+n+2)^2 + d(x)^2} - \sqrt{(n+3)^2 + d(x)^2} > \sqrt{(n+3)^2 + d(y)^2} - \sqrt{2^2 + d(y)^2}.$$ It follows from Lemma~\ref{lem:ine2} that $\sqrt{(m+n+2)^2 + 2^2} \ge \sqrt{(n+3)^2 + (m+1)^2}$. Hence the result follows.
	\end{itemize}
\end{proof}

Let $G \in \mathcal{U}(N,k)$ be such that it contains a cycle $C_l$. Let $x \sim u \sim y$ be vertices on the cycle and $T$ be a tree that is attached to $u$ with $u \sim w \sim v$ where $v,w \in T$ and $d(v) \ge 3$. Let $N(u) = \{x,y,w\} \cup \{u_1,u_2,\cdots,u_n\}$ and $N(v) = \{w\} \cup \{v_1,v_2,\cdots,v_m\}$. Let $T^*$ be the tree component containing $w$ and $v$ that is obtained by deleting the edge $u \sim w$. Observe that by Lemmas~\ref{lem:tree1} and \ref{lem:tree2} we can conclude that $v$ is the only vertex in $T^*$ that has degree $\ge 3$ and this implies $d(w) = 2$.

\begin{lem}\label{lem:uni2}
	Let $G \in \mathcal{U}(N,k)$ be the graph as defined above. Then we have the following:
	\begin{itemize}
		\item[(a)] If $d(u) \ge d(v)$, then consider $G^*$ to be the graph obtained from $G$ by deleting the edges $v \sim v_i$ and adding the edges $u \sim v_i$ for all $i \ge 2$, then we have $SO(G^*) > SO(G)$.
		\item[(b)] If $d(v) > d(u)$, then consider $G^*$ to be the graph obtained from $G$ by deleting the edges $u \sim u_i$, $u \sim x$ and adding the edges $v \sim u_i$, $v \sim x$ for all $i \ge 1$. Then $SO(G^*) > SO(G)$.
	\end{itemize}
\end{lem}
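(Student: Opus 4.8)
The plan is to mirror the proof of Lemma~\ref{lem:uni1}, treating the extra degree-$2$ vertex $w$ as splitting the single edge $u\sim v$ of that lemma into the two edges $u\sim w$ and $w\sim v$. First I would record the degrees $d(u)=n+3$, $d(v)=m+1$, $d(w)=2$, and note $m\ge 2$ since $d(v)\ge 3$. The hypothesis of part (a), $d(u)\ge d(v)$, reads $m\le n+2$; that of part (b), $d(v)>d(u)$, reads $m>n+2$. In each case I would check that the rewiring keeps $G^*$ unicyclic (in (b) the cycle merely lengthens by two, now running through $w$ and $v$), write $SO(G^*)-SO(G)$ as a sum of per-edge differences, and discard the manifestly nonnegative contributions to obtain a short lower bound.

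For part (a) the move gives $d^*(u)=m+n+2$ and $d^*(v)=2$, so (assuming WLOG $d(v_1)\ge d(v_i)$ for $i\ge 2$) the only decreasing edges are $v\sim w$ and $v\sim v_1$. I would keep four terms: the gain on $u\sim w$, the gain from relocating $v_2$, and these two losses, dropping the remaining nonnegative gains (on $u\sim x$, $u\sim y$, the $u_j$-edges, and the other relocated $v_i$). The gain on $u\sim w$ beats the loss on $v\sim w$ directly by Lemma~\ref{lem:f} with $a=m-1$, $b=2$ (since $f(n+3)>f(2)$), and the gain from moving $v_2$ beats the loss on $v\sim v_1$ by the chain used in Lemma~\ref{lem:tree1}: Lemma~\ref{lem:f} with $b=d(v_2)$, then $m\le n+2$, then Lemma~\ref{lem:g} with $a=m+1$, $b=2$ together with $d(v_2)\le d(v_1)$. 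Because this is essentially Lemma~\ref{lem:tree1} plus one offsetting pair, I expect to present part (a) briefly or defer to that lemma.

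Part (b) is the real work and follows the explicit computation of Lemma~\ref{lem:uni1}(b) with one new ingredient. Here $d^*(u)=2$ and $d^*(v)=m+n+2$, and discarding the nonnegative contributions (the relocated $u_i$-edges and the $v_j$-edges at the now-heavier $v$) yields
\[
SO(G^*) - SO(G) > (A) + (B) + (C) + (D),
\]
where $(A)=\sqrt{2^2+d(y)^2}-\sqrt{(n+3)^2+d(y)^2}$ and $(B)=\sqrt{(m+n+2)^2+d(x)^2}-\sqrt{(n+3)^2+d(x)^2}$ come from $u\sim y$ and the relocation of $x$ exactly as before, while $(C)=\sqrt{2^2+2^2}-\sqrt{(n+3)^2+2^2}$ and $(D)=\sqrt{(m+n+2)^2+2^2}-\sqrt{(m+1)^2+2^2}$ are the contributions of the two edges $u\sim w$ and $w\sim v$. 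I would show $(A)+(B)>0$ by the identical chain of Lemma~\ref{lem:uni1}(b) (Lemma~\ref{lem:f} with $a=m-1$, $b=d(x)$; then $m>n+2$; then Lemma~\ref{lem:g} with $a=n+3$, $b=2$ and $d(y)\ge d(x)$), and separately $(C)+(D)>0$ by Lemma~\ref{lem:f} with $a=n+1$, $b=2$, since $(D)=f(m+1)>f(2)=-(C)$.

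The main obstacle, and the only genuine departure from Lemma~\ref{lem:uni1}, is the bookkeeping at $w$: where the earlier lemma had a single edge $u\sim v$ whose contribution was dispatched by Lemma~\ref{lem:ine2}, here that contribution splits into the two edges giving $(C)$ and $(D)$, which must instead be balanced against each other. Establishing $(C)+(D)>0$ via Lemma~\ref{lem:f} (in place of Lemma~\ref{lem:ine2}) is the key new step; once it is in hand, every remaining estimate transfers verbatim from Lemma~\ref{lem:uni1}.
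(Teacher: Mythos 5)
Your proposal is correct and follows essentially the same route as the paper: for part (b) you retain exactly the same four terms $(A)+(B)+(C)+(D)$ as the paper's displayed lower bound, pair $(C)+(D)$ via Lemma~\ref{lem:f} with $a=n+1$, $b=2$ (the paper's step, replacing the Lemma~\ref{lem:ine2} appeal of Lemma~\ref{lem:uni1}), and dispatch $(A)+(B)$ by the chain already established in Lemma~\ref{lem:uni1}(b). Part (a) you handle by the tree-lemma argument, which is precisely what the paper does when it defers to Lemma~\ref{lem:tree2}.
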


\begin{proof}
\begin{itemize}
	\item[(a)] We skip the proof since the argument follows similarly as in Lemma~\ref{lem:tree2}.
	\item[(b)] Without loss of generality we assume that $d(y) \ge d(x)$. Since $d(v) > d(u)$ we have $m > n+2$. We complete the proof by showing $SO(G^*) - SO(G) > 0$. Note that
	\begin{align*}
		SO(G^*) - SO(G)  &> \sqrt{2^2 + d(y)^2} - \sqrt{(n+3)^2 + d(y)^2}\\
		&\ \ +\sqrt{(m+n+2)^2 + d(x)^2} - \sqrt{(n+3)^2 + d(x)^2}\\
		&\ \ +\sqrt{(m+n+2)^2 + 2^2} - \sqrt{(m+1)^2 + 2^2}\\
		&\ \ +\sqrt{2^2+2^2} - \sqrt{2^2 + (n+3)^2}.
	\end{align*}
	Using Lemma~\ref{lem:f} and substituting $a = n+1$ and $b = 2$ we have $$\sqrt{(m+n+2)^2 + 2^2} - \sqrt{(m+1)^2 + 2^2} > \sqrt{(n+3)^2 + 2^2} - \sqrt{2^2 + 2^2},$$ since $f(m+1) > f(2)$. Next in the proof of Lemma~\ref{lem:uni1} we have already observed that $$\sqrt{(m+n+2)^2 + d(x)^2} - \sqrt{(n+3)^2 + d(x)^2} > \sqrt{(n+3)^2 + d(y)^2} - \sqrt{2^2 + d(y)^2}.$$ Hence the result follows.
\end{itemize}
\end{proof}

\begin{rem}\label{rem:uni3}
	In Lemmas~\ref{lem:uni1} and \ref{lem:uni2}, we have consider $G \in \mathcal{U}(N,k)$  such that a tree $T$ is attached to a vertex $u$ on the cycle and the vertex $v \in T$ that has degree $\ge 3$ is such that $d(u,v) = 1$ and $d(u,v) = 2$, respectively. Same argument follows if $d(u,v) \ge 3$ since all the vertices on the path from $u$ to $v$ has degree equal to $2$, by Lemmas~\ref{lem:tree1} and \ref{lem:tree2}.
\end{rem}

Let $G \in \mathcal{U}(N,k)$ be such that it contains a cycle $C_l$ of length at least $4$. Let $x \sim u \sim v \sim y$ be vertices on $C_l$ in $G$ such that there are paths attached to $u$ and $v$. Let $N(u) = \{x,v\} \cup \{u_1,u_2,\cdots,u_n\}$ and $N(v) = \{u,y\} \cup \{v_1,v_2,\cdots,v_m\}$.

\begin{lem}\label{lem:cycle1}
	Let $G$ be the graph defined as above and $G^*$ be the graph obtained from $G$ by deleting the edge $u \sim v$, merging the two vertices and adding a new vertex to a pendant vertex of a tree attached to $u$ or $v$, say $z$. Then we have $SO(G^*) > SO(G)$.
\end{lem}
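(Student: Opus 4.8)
The plan is to track exactly the edges whose weight changes. Deleting $u\sim v$ and merging $u,v$ into one vertex $w$ yields $d(w)=d(u)+d(v)-2=n+m+2$, reroutes every edge formerly at $u$ or $v$ to $w$, and (apart from the raised degree of the extended pendant $z$ and the brand-new edge $z\sim z'$ of weight $\sqrt{2^2+1^2}$) leaves all other edges untouched. I would set up an edge bijection: pair each edge $w\sim t$ of $G^*$ with its predecessor $u\sim t$ or $v\sim t$ in $G$, and pair the deleted edge $u\sim v$ with the new edge $z\sim z'$. Since $n,m\ge 1$ forces $n+m+2>n+2$ and $n+m+2>m+2$, while no vertex degree decreases, each paired merge term (of the form $\sqrt{(n+m+2)^2+d(t)^2}-\sqrt{(n+2)^2+d(t)^2}$ or $\sqrt{(n+m+2)^2+d(t)^2}-\sqrt{(m+2)^2+d(t)^2}$) is positive, as is the weight increase on the edge incident to $z$. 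Hence
\[ SO(G^*)-SO(G)\ \ge\ \big(\text{sum of positive merge gains}\big)+\sqrt{2^2+1^2}-\sqrt{(n+2)^2+(m+2)^2}, \]
and the problem reduces to showing these gains together with $\sqrt5$ outweigh the single lost weight $\sqrt{(n+2)^2+(m+2)^2}$.

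The decisive tool is part~(c) of Lemma~\ref{lem:ine2}, namely $\sqrt{(n+m+2)^2+2^2}\ge\sqrt{(n+2)^2+(m+2)^2}$: the lost weight never exceeds that of a single edge joining $w$ to a degree-$2$ vertex, and it is strict here since the difference of radicands equals $2nm>0$. Since paths are attached to both $u$ and $v$, the first vertex of each such path has degree at most $2$; calling these roots $u_1,v_1$, Lemma~\ref{lem:g} shows their merge gains are smallest when that degree equals $2$, giving the lower bounds $\sqrt{(n+m+2)^2+2^2}-\sqrt{(n+2)^2+2^2}$ and $\sqrt{(n+m+2)^2+2^2}-\sqrt{(m+2)^2+2^2}$. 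Retaining only these two gains, the new edge $\sqrt5$, and the pendant bump (equal to $\sqrt{2^2+2^2}-\sqrt{2^2+1^2}$ when $z$ lies on a path of length at least $2$, so that its neighbour has degree $2$), and discarding the remaining positive gains, everything collapses to the concrete inequality
\[ 2\sqrt{(n+m+2)^2+2^2}+\sqrt{2^2+2^2}\ >\ \sqrt{(n+2)^2+2^2}+\sqrt{(m+2)^2+2^2}+\sqrt{(n+2)^2+(m+2)^2}, \]
whose strictness for $n,m\ge 1$ I would verify directly, the slack being underwritten by the strict forms of part~(c) of Lemma~\ref{lem:ine2} and of Lemma~\ref{lem:ine1} (the latter strict because $n+2,m+2\ge 3$).

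The hard part is that this last estimate is genuinely tight: already at $n=m=1$ it reads $2\sqrt{20}+\sqrt8>2\sqrt{13}+\sqrt{18}$, a margin of barely $0.3$, so none of the retained terms — in particular the pendant bump — may be dropped, and one must avoid double-counting that bump against a merge gain. I therefore expect the proof to branch on the lengths of the attached paths. If every path at $u$ and $v$ has length $1$, then the roots are pendants, $w$ has no degree-$2$ neighbour a priori, and the degree-$2$ vertex demanded by part~(c) of Lemma~\ref{lem:ine2} must be supplied by the extension itself; the relevant terms then become $\sqrt{(n+m+2)^2+2^2}-\sqrt{(n+2)^2+1^2}$ and the bookkeeping is redone, though it is of the same flavour. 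A further subtlety is that the cycle neighbours $x,y$ may themselves carry paths and hence have large degree, rendering the merge gains on $w\sim x$ and $w\sim y$ negligible; the argument must therefore rest solely on the guaranteed degree-$\le 2$ path-root gains and on the pendant extension, never on the two cycle edges.
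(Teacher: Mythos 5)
Your proposal is correct and takes essentially the same route as the paper's proof: both retain exactly the two path-root merge gains (bounded using $d(u_1),d(v_1)\le 2$ via Lemmas~\ref{lem:f} and~\ref{lem:g}), the extension terms at $z$, and the lost edge $u\sim v$, discard all other positive gains, branch on whether the neighbour of $z$ has degree~$2$ (your all-pendant-edges case is the paper's Case~2, closed by the same term $\sqrt{(m+n+2)^2+2^2}-\sqrt{(n+2)^2+1}$ and Lemma~\ref{lem:ine2}(c)), and finish with Lemmas~\ref{lem:ine1} and~\ref{lem:ine2}(c). The only cosmetic difference is the order of the monotonicity steps: the paper applies Lemma~\ref{lem:f} before Lemma~\ref{lem:g} so Case~1 reduces verbatim to Lemma~\ref{lem:ine1}, while your combined inequality $2\sqrt{(n+m+2)^2+2^2}+\sqrt{8}>\sqrt{(n+2)^2+2^2}+\sqrt{(m+2)^2+2^2}+\sqrt{(n+2)^2+(m+2)^2}$ is true but is best closed by Lemma~\ref{lem:ine2}(c) plus one more application of Lemma~\ref{lem:f} (giving $\sqrt{(n+m+2)^2+2^2}+\sqrt{8}>\sqrt{(n+2)^2+2^2}+\sqrt{(m+2)^2+2^2}$), since Lemma~\ref{lem:ine1} as you cite it bounds the wrong side.
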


\begin{proof}
	Given that $z$ is a pendant vertex on a path we assume that the vertex adjacent to $z$ be $t$. We prove the result by showing that $SO(G^*) - SO(G) > 0$. 
	
\underline{\textbf{Case 1:}} $d(t) = 2$. We have
	\begin{align*}
		SO(G^*) - SO(G) &> \sqrt{(m+n+2)^2 + d(u_1)^2} - \sqrt{(n+2)^2 + d(u_1)^2}\\
		&\ \ + \sqrt{(m+n+2)^2 + d(v_1)^2} - \sqrt{(m+2)^2 + d(v_1)^2}\\
		&\ \ +\sqrt{2^2+1}- \sqrt{(m+2)^2 + (n+2)^2} + \sqrt{2^2 + d(t)^2} - \sqrt{d(t)^2 + 1}\\
		&= \sqrt{(m+n+2)^2 + d(u_1)^2} - \sqrt{(n+2)^2 + d(u_1)^2}\\
		&\ \ + \sqrt{(m+n+2)^2 + d(v_1)^2} - \sqrt{(m+2)^2 + d(v_1)^2}\\
		&\ \ - \sqrt{(m+2)^2 + (n+2)^2} + \sqrt{2^2 + 2^2}
	\end{align*}
Using Lemma~\ref{lem:f} and substituting $a = m$ and $b = d(u_1)$ we have $$\sqrt{(m+n+2)^2 + d(u_1)^2} - \sqrt{(n+2)^2 + d(u_1)^2} > \sqrt{(m+2)^2 + d(u_1)^2} - \sqrt{2^2 + d(u_1)^2},$$ since $f(n+2) > f(2)$. Again using Lemma~\ref{lem:f} and substituting $a = n$ and $b = d(v_1)$ we have $$\sqrt{(m+n+2)^2 + d(v_1)^2} - \sqrt{(m+2)^2 + d(v_1)^2} > \sqrt{(n+2)^2 + d(v_1)^2} - \sqrt{2^2 + d(v_1)^2},$$ since $f(m+2) > f(2)$. Adding these two inequalities, we have
\begin{equation}\label{eqn:1}
	\begin{split}
			\sqrt{(m+n+2)^2 + d(u_1)^2} - \sqrt{(n+2)^2 + d(u_1)^2} + \sqrt{(m+n+2)^2 + d(v_1)^2} - \sqrt{(m+2)^2 + d(v_1)^2}\\
		> \sqrt{(m+2)^2 + d(u_1)^2} - \sqrt{2^2 + d(u_1)^2} + \sqrt{(n+2)^2 + d(v_1)^2} - \sqrt{2^2 + d(v_1)^2}.
	\end{split}
\end{equation}

Observe that $u_1,v_1$ are vertices on the path and hence $d(u_1),d(v_1) \le 2$. Thus using Lemma~\ref{lem:g} and substituting $a = m+2$ and $b = 2$ we have $$\sqrt{(m+2)^2 + d(u_1)^2} - \sqrt{2^2 + d(u_1)^2} \ge \sqrt{(m+2)^2 + 2^2} - \sqrt{2^2 + 2^2},$$ since $g(d(u_1)) \ge g(2)$. Using similar argument and using the fact $d(v_1) \le 2$ we have $$\sqrt{(n+2)^2 + d(v_1)^2} - \sqrt{2^2 + d(v_1)^2} \ge \sqrt{(n+2)^2 + 2^2} - \sqrt{2^2 + 2^2}.$$ Adding the above two inequalities, we have
\begin{equation}\label{eqn:2}
	\begin{split}
			\sqrt{(m+2)^2 + d(u_1)^2} - \sqrt{2^2 + d(u_1)^2} + \sqrt{(n+2)^2 + d(v_1)^2} - \sqrt{2^2 + d(v_1)^2}\\
		\ge \sqrt{(m+2)^2 + 2^2} - \sqrt{2^2 + 2^2} + \sqrt{(n+2)^2 + 2^2} - \sqrt{2^2 + 2^2}
	\end{split}
	\end{equation}

Combining the inequalities \eqref{eqn:2} and \eqref{eqn:2} one can see that to prove the fact that $SO(G) - SO(G^*) > 0$ it is enough to show that 
\begin{align*}
	\sqrt{(m+2)^2 + 2^2} - \sqrt{2^2 + 2^2} + \sqrt{(n+2)^2 + 2^2} - \sqrt{2^2 + 2^2} &\ge \sqrt{(m+2)^2 + (n+2)^2} -  \sqrt{2^2+2^2}
\end{align*} i.e. we have to prove that $$\sqrt{(m+2)^2 + 2^2} + \sqrt{(n+2)^2 + 2^2} \ge \sqrt{(m+2)^2 + (n+2)^2} + \sqrt{2^2+2^2}.$$ But this is true and the fact follows from Lemma~\ref{lem:ine1} by substituting $c = m+2$ and $d = n+2$. 

\underline{\textbf{Case 2:}} There exists no vertex $t$ among the paths such that $d(t) = 2$. This implies that all the paths are actually pendant edges and $z \sim u$ or $z \sim v$. In this case we have
\begin{align*}
	SO(G^*) - SO(G) &> \sqrt{(m+n+2)^2 + 2^2} - \sqrt{(n+2)^2 + 1}\\
	&\ \ + \sqrt{(m+n+2)^2 + 1} - \sqrt{(m+2)^2 +1}\\
	&\ \ +\sqrt{2^2+1}- \sqrt{(m+2)^2 + (n+2)^2}.
\end{align*}
Using Lemma~\ref{lem:f} and substituting $a = n$ and $b = 1$ we have $$\sqrt{(m+n+2)^2 + 1} - \sqrt{(m+2)^2 + 1} > \sqrt{(n+2)^2 + 1} - \sqrt{2^2 + 1},$$ since $f(m+2) > f(2)$. Using this fact we have 
\begin{align*}
	SO(G^*) - SO(G)&> \sqrt{(m+n+2)^2 + 2^2} - \sqrt{(n+2)^2 + 1}\\
	&\ \ + \sqrt{(n+2)^2 + 1} - \sqrt{2^2 + 1} +\sqrt{2^2+1}- \sqrt{(m+2)^2 + (n+2)^2}\\
	&\ \  = \sqrt{(m+n+2)^2 + 2^2} -\sqrt{(m+2)^2 + (n+2)^2} \ge 0.
\end{align*}
The last inequality follows from Lemma~\ref{lem:ine2}. Thus combining both cases we have $SO(G^*) - SO(G) > 0$ and hence the result follows.
\end{proof}

Let $G \in \mathcal{U}(N,k)$ be such that it contains a cycle $C_l$ of length at least $4$. Let $ u \sim w \sim v $ be vertices on $C_l$ in $G$ such that there are paths attached to $u$ and $v$ and $d(w) = 2$. Let $N(u) = \{x,w\} \cup \{u_1,u_2,\cdots,u_n\}$ and $N(v) = \{w,y\} \cup \{v_1,v_2,\cdots,v_m\}$.

\begin{lem}\label{lem:cycle2}
	Let $G$ be the graph defined as above and $G^*$ be the graph obtained from $G$ by deleting the edges $u \sim w$ and $w \sim v$, merging the vertices $u,v$ and adding a path of length $2$ to a pendant vertex, say $z$ of a path attached to $u$ or $v$. Then we have $SO(G^*) > SO(G)$.
\end{lem}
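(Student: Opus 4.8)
The plan is to compute $SO(G^*)-SO(G)$ edge by edge and show it is positive, following the template of Lemma~\ref{lem:cycle1}. First I would record the relevant degrees: $d(u)=n+2$, $d(v)=m+2$, $d(w)=2$, and $m,n\ge 1$ since paths are attached to both $u$ and $v$. After deleting $u\sim w$, $w\sim v$ and merging $u,v$, the merged vertex has degree $m+n+2$ (here one should check the merged graph stays simple and unicyclic in $\mathcal{U}(N,k)$; a multi-edge would arise only in the degenerate case $x=y$, i.e. $l=4$, which, if it occurs, must be treated directly). Each edge joining $u$ to a neighbour other than $w$ (namely $x,u_1,\dots,u_n$) and each edge joining $v$ to a neighbour other than $w$ (namely $y,v_1,\dots,v_m$) has an incident degree raised from $n+2$, resp.\ $m+2$, to $m+n+2$, contributing a strictly positive difference. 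The remaining changes are the loss of the two edges at $w$ (contributing $-\sqrt{(n+2)^2+2^2}-\sqrt{(m+2)^2+2^2}$), the gain of the two edges $\sqrt{2^2+2^2}+\sqrt{2^2+1}=\sqrt8+\sqrt5$ of the new length-$2$ path, and the change of the edge $z\sim t$ from $\sqrt{1+d(t)^2}$ to $\sqrt{2^2+d(t)^2}$ since $d(z)$ rises from $1$ to $2$. As in Lemma~\ref{lem:cycle1}, I would split on $d(t)$.

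In the first case $d(t)=2$, so $z$ lies at the end of a path of length $\ge 2$ and is distinct from every $u_i,v_i$. Discarding all positive neighbour-difference terms except those at $u_1$ and $v_1$, and using $\sqrt{2^2+d(t)^2}-\sqrt{1+d(t)^2}=\sqrt8-\sqrt5$, the estimate collapses to
\begin{align*}
SO(G^*)-SO(G) &> \big[\sqrt{(m+n+2)^2+d(u_1)^2}-\sqrt{(n+2)^2+d(u_1)^2}\big]\\
&\ \ +\big[\sqrt{(m+n+2)^2+d(v_1)^2}-\sqrt{(m+2)^2+d(v_1)^2}\big]\\
&\ \ +2\sqrt8-\sqrt{(n+2)^2+2^2}-\sqrt{(m+2)^2+2^2}.
\end{align*}
Applying Lemma~\ref{lem:f} ($f(n+2)>f(2)$ with $a=m$, $b=d(u_1)$) and then Lemma~\ref{lem:g} ($g(d(u_1))\ge g(2)$ with $a=m+2$, $b=2$, since $d(u_1)\le 2$) gives that the $u_1$ term exceeds $\sqrt{(m+2)^2+2^2}-\sqrt8$, and symmetrically the $v_1$ term exceeds $\sqrt{(n+2)^2+2^2}-\sqrt8$. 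Summing, the two negative cycle terms and the $2\sqrt8$ cancel against these two lower bounds, leaving a strictly positive remainder. Notably, unlike Lemma~\ref{lem:cycle1}, this case needs no appeal to Lemma~\ref{lem:ine1}: breaking the two edges $u\sim w$, $w\sim v$ rather than a single edge $u\sim v$ decouples the roles of $m$ and $n$.

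In the second case there is no path vertex of degree $2$, so every attached path is a pendant edge and $z$ is adjacent to $u$ or $v$; say $z$ is a pendant neighbour of $u$, so $z$ equals one of the $u_i$ and $t=u$. Here the edge at $z$ changes from $\sqrt{(n+2)^2+1}$ (the edge $u\sim z$) to $\sqrt{(m+n+2)^2+2^2}$ (the edge from the merged vertex to $z$, now of degree $2$); there is no separate $z\sim t$ contribution, since it is this very edge. Keeping this term together with one term $\sqrt{(m+n+2)^2+1}-\sqrt{(m+2)^2+1}$ from a pendant $v_1$ and discarding the other positive terms, I would use Lemma~\ref{lem:f} ($a=n$, $b=1$, $f(m+2)>f(2)$) to replace the $v_1$ term by $\sqrt{(n+2)^2+1}-\sqrt5$. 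After the telescoping of $\sqrt{(n+2)^2+1}$ and $\sqrt5$, the estimate reduces to
$$SO(G^*)-SO(G)>\sqrt{(m+n+2)^2+2^2}+\sqrt8-\sqrt{(n+2)^2+2^2}-\sqrt{(m+2)^2+2^2}.$$
This is nonnegative by one more application of Lemma~\ref{lem:f}: with $f(x)=\sqrt{(x+n)^2+2^2}-\sqrt{x^2+2^2}$ (so $a=n$, $b=2$), monotonicity yields $f(m+2)>f(2)$, which is exactly $\sqrt{(m+n+2)^2+2^2}-\sqrt{(m+2)^2+2^2}>\sqrt{(n+2)^2+2^2}-\sqrt8$. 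Hence $SO(G^*)>SO(G)$ in both cases.

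The main obstacle I anticipate is the bookkeeping rather than any single inequality: one must track precisely which edges survive the merge, and in particular notice that in the second case the pendant $z$ coincides with one of the edges $u\sim u_i$, so its contribution must not be double counted (contrast this with the first case, where $z\sim t$ is a genuinely separate internal path edge). A secondary verification is that the operation produces a graph in $\mathcal{U}(N,k)$: the pendant count is preserved because $z$ loses its pendant status while the new endpoint of the length-$2$ path becomes pendant, and simplicity forces the mild restriction $x\neq y$ noted above.
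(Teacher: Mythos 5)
Your proposal is correct and follows essentially the same route as the paper's proof: the identical case split on whether $d(t)=2$ or all attached paths are pendant edges, the same chains of estimates via Lemma~\ref{lem:f} and Lemma~\ref{lem:g} with the fact $d(u_1),d(v_1)\le 2$ in Case 1, and the same telescoping in Case 2 ending with Lemma~\ref{lem:f} applied with $a=n$, $b=2$. Your added bookkeeping remarks (that $z$ coincides with some $u_i$ in Case 2 so its edge must not be double counted, and that the merge is only simple when the cycle has length at least $5$, the $x=y$ case needing separate treatment) are sound and, if anything, slightly more careful than the paper, which leaves both points implicit.
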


\begin{proof}
Given that $z$ is a pendant vertex on a path we assume that the vertex adjacent to $z$ be $t$.	We prove the result by showing that $SO(G^*) - SO(G) > 0$.

\underline{\textbf{Case 1:}} $d(t) = 2$. We have
	\begin{align*}
		SO(G^*) - SO(G) &> \sqrt{(m+n+2)^2 + d(u_1)^2} - \sqrt{(n+2)^2 + d(u_1)^2}\\
		&\ \ + \sqrt{(m+n+2)^2 + d(v_1)^2} - \sqrt{(m+2)^2 + d(v_1)^2}\\
		&\ \ +\sqrt{2^2+1} + \sqrt{2^2 + 2^2}- \sqrt{(m+2)^2 + 2^2} - \sqrt{(n+2)^2+2^2}\\
		&\ \ + \sqrt{2^2 + d(t)^2} - \sqrt{d(t)^2 + 1}\\
		&= \sqrt{(m+n+2)^2 + d(u_1)^2} - \sqrt{(n+2)^2 + d(u_1)^2}\\
		&\ \ + \sqrt{(m+n+2)^2 + d(v_1)^2} - \sqrt{(m+2)^2 + d(v_1)^2}\\
		&\ \ + 2\sqrt{2^2 + 2^2}- \sqrt{(m+2)^2 + 2^2} - \sqrt{(n+2)^2+2^2}.
	\end{align*}
Using the fact $d(u_1),d(v_1) \le 2$ along with similar argument as in Lemma~\ref{lem:cycle1} we have 
\begin{align*}
	&\sqrt{(m+n+2)^2 + d(u_1)^2} - \sqrt{(n+2)^2 + d(u_1)^2} +  \sqrt{(m+n+2)^2 + d(v_1)^2} - \sqrt{(m+2)^2 + d(v_1)^2}\\
	&> \sqrt{(m+2)^2 + d(u_1)^2} - \sqrt{2^2 + d(u_1)^2} + \sqrt{(n+2)^2 + d(v_1)^2} - \sqrt{2^2 + d(v_1)^2}\\
	&\ge \sqrt{(m+2)^2 + 2^2} - \sqrt{2^2 + 2^2} + \sqrt{(n+2)^2 + 2^2} - \sqrt{2^2 + 2^2}
\end{align*} and hence we have the required inequality.

\underline{\textbf{Case 2:}} There exists no vertex $t$ among the paths such that $d(t) = 2$. This implies that all the paths are actually pendant edges and $z \sim u$ or $z \sim v$. In this case we have
\begin{align*}
	SO(G^*) - SO(G) &> \sqrt{(m+n+2)^2 + 2^2} - \sqrt{(n+2)^2 + 1}\\
	&\ \ + \sqrt{(m+n+2)^2 + 1} - \sqrt{(m+2)^2 +1}\\
	&\ \ +\sqrt{2^2+2^2}+\sqrt{2^2+1}- \sqrt{(m+2)^2 + 2^2} - \sqrt{(n+2)^2 + 2^2}.
\end{align*}
Using Lemma~\ref{lem:f} and substituting $a = n$ and $b = 1$ we have $$\sqrt{(m+n+2)^2 + 1} - \sqrt{(m+2)^2 + 1} > \sqrt{(n+2)^2 + 1} - \sqrt{2^2 + 1},$$ since $f(m+2) > f(2)$. Using this fact and simplifying we have 
\begin{align*}
	SO(G^*) - SO(G) &> \sqrt{(m+n+2)^2 + 2^2} - \sqrt{(n+2)^2 + 1} + \sqrt{(n+2)^2 + 1} - \sqrt{2^2 + 1}\\
	&\ \ +\sqrt{2^2+2^2}+\sqrt{2^2+1}- \sqrt{(m+2)^2 + 2^2} - \sqrt{(n+2)^2 + 2^2}\\
	&= \sqrt{(m+n+2)^2 + 2^2} +\sqrt{2^2+2^2} - \sqrt{(m+2)^2 + 2^2} - \sqrt{(n+2)^2 + 2^2}.
\end{align*}
Using Lemma~\ref{lem:f} and substituting $a = n$ and $b = 2$ we have $$\sqrt{(m+n+2)^2 + 2^2} - \sqrt{(m+2)^2 + 2^2} > \sqrt{(n+2)^2 + 2^2} - \sqrt{2^2 + 2^2},$$ since $f(m+2) > f(2)$. Hence we have the required inequality. 

Thus combining both cases we have $SO(G^*) - SO(G) > 0$ and hence the result follows.
\end{proof}

\begin{rem}\label{rem:cycle3}
	In Lemmas~\ref{lem:cycle1} and \ref{lem:cycle2} we have considered the cases when $d(u,v) = 1$ and $d(u,v) = 2$. If $d(u,v) \ge 3$, then without loss of generality we can assume that all the vertices on the $uv$ path are of degree equal to $2$. Then consider the graph $G^*$ by deleting the $uv$ path, merging the vertices $u$ and $v$ and attaching a path of same length at a pendant vertex of a path attached to either $u$ or $v$. The proof of this case follows from the proof of Lemmas~\ref{lem:cycle1} and \ref{lem:cycle2}.
\end{rem}

Let $G \in \mathcal{U}(N,k)$ be such that it contains the cycle $C_3 = u \sim v \sim w \sim u$ along with some (or no) paths attached to $u,v$ and $w$. Without loss of generality we assume that $d(u) \ge d(v) \ge d(w)$. Let $N(u) = \{v,w\} \cup \{u_1,u_2,\cdots,u_n\}$ and $N(v) = \{w,u\} \cup \{v_1,v_2,\cdots,v_m\}$. 

\begin{lem}\label{lem:C3}
Let G be the graph defined as above and $G^*$ be the graph obtained from $G$ by deleting the edges $v \sim v_i$ and adding the edges $u \sim v_i$ for all $i \ge 1$. Then $SO(G^*) > SO(G)$.
\end{lem}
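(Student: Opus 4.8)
The plan is to write $SO(G^*)-SO(G)$ as an \emph{exact} sum of edge-weight differences, isolate the single negative contribution, and absorb it using Lemma~\ref{lem:f}. Write $d(u)=n+2$ and $d(v)=m+2$ in $G$; since $v$ carries at least one non-cycle neighbour we have $m\ge 1$, and the hypothesis $d(u)\ge d(v)$ forces $n\ge m\ge 1$. The operation changes only the degree of $u$ (from $n+2$ to $m+n+2$) and of $v$ (from $m+2$ to $2$), while $d(w)$ and all of $d(u_i),d(v_i)$ are unchanged. Hence the only edges whose weight changes are those incident to $u$ or $v$: the cycle edges $u\sim v$, $u\sim w$, $v\sim w$, the edges $u\sim u_i$, and the edges that migrate from $v\sim v_i$ to $u\sim v_i$. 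All internal edges of the attached trees and of the paths hanging off $w$ cancel. Collecting the surviving contributions gives
\begin{align*}
	SO(G^*)-SO(G) &= \sqrt{(m+n+2)^2+2^2}-\sqrt{(n+2)^2+(m+2)^2} \\
	&\quad + \Big(\sqrt{(m+n+2)^2+d(w)^2}-\sqrt{(n+2)^2+d(w)^2}\Big) \\
	&\quad + \Big(\sqrt{2^2+d(w)^2}-\sqrt{(m+2)^2+d(w)^2}\Big) \\
	&\quad + \sum_i\Big(\sqrt{(m+n+2)^2+d(u_i)^2}-\sqrt{(n+2)^2+d(u_i)^2}\Big) \\
	&\quad + \sum_i\Big(\sqrt{(m+n+2)^2+d(v_i)^2}-\sqrt{(m+2)^2+d(v_i)^2}\Big).
\end{align*}

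The two sums over $u_i$ and $v_i$ are termwise strictly positive because $m+n+2>n+2$ and $m+n+2>m+2$ (using $m,n\ge 1$); as at least one $v_i$ exists, this already supplies a strictly positive amount. The first line is nonnegative by Lemma~\ref{lem:ine2}(c), since $\sqrt{(n+2)^2+(m+2)^2}=\sqrt{(m+2)^2+(n+2)^2}$. The genuinely negative term is $\sqrt{2^2+d(w)^2}-\sqrt{(m+2)^2+d(w)^2}$ on the third line, and the crux is to show it is dominated by the positive $w$-term on the second line. Pairing the two $w$-terms and invoking Lemma~\ref{lem:f} with $a=m$, $b=d(w)$, evaluated at $x=n+2$ and $x=2$ (so $f(n+2)\ge f(2)$), yields
$$\sqrt{(m+n+2)^2+d(w)^2}-\sqrt{(n+2)^2+d(w)^2}\ \ge\ \sqrt{(m+2)^2+d(w)^2}-\sqrt{2^2+d(w)^2},$$
which says precisely that the two $w$-terms together are nonnegative.

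Combining the pieces, every grouped contribution is nonnegative and the $u_i$ and $v_i$ sums are strictly positive, so $SO(G^*)-SO(G)>0$, as required. I expect the main obstacle to be the bookkeeping that isolates exactly one negative term: one must verify that $d(w)$ is unaffected and that all tree-internal edges cancel, so that $\sqrt{2^2+d(w)^2}-\sqrt{(m+2)^2+d(w)^2}$ is matched against a \emph{single} positive $w$-term rather than against something less tractable. Once that pairing is spotted, the convexity of $x\mapsto\sqrt{x^2+b^2}$ encoded in Lemma~\ref{lem:f} closes the gap, while Lemma~\ref{lem:ine2}(c) disposes of the cycle edge $u\sim v$.
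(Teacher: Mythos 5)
Your proof is correct and follows essentially the same route as the paper's: the same edge-by-edge decomposition (the paper simply drops the strictly positive $u_i$- and $v_i$-sums to obtain its strict inequality ``$>$''), the same pairing of the two $w$-terms via Lemma~\ref{lem:f} with $a=m$, $b=d(w)$ evaluated at $x=n+2$ versus $x=2$, and Lemma~\ref{lem:ine2} for the cycle edge $u\sim v$. Incidentally, your right-hand side $\sqrt{(m+2)^2+d(w)^2}-\sqrt{2^2+d(w)^2}$ is what Lemma~\ref{lem:f} actually yields there; the paper's displayed $\sqrt{(n+2)^2+d(w)^2}-\sqrt{2^2+d(w)^2}$ at that step is a typo, so your write-up is the corrected version of the same argument.
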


\begin{proof}
We prove the result by showing that $SO(G^*) - SO(G) > 0$. Observe that 
\begin{align*}
	SO(G^*) - SO(G) &> \sqrt{(m+n+2)^2 + 2^2} -\sqrt{(m+2)^2 + (n+2)^2}\\
				    &\ \ +\sqrt{(m+n+2)^2 + d(w)^2} - \sqrt{(n+2)^2 + d(w)^2}\\
				    &\ \ +\sqrt{2^2 + d(w)^2} - \sqrt{(m+2)^2 + d(w)^2}
\end{align*}
Using Lemma~\ref{lem:f} and substituting $a = m$ and $b = d(w)$ we have $$\sqrt{(m+n+2)^2 + d(w)^2} - \sqrt{(n+2)^2 + d(w)^2} > \sqrt{(n+2)^2 + d(w)^2} - \sqrt{2^2 + d(w)^2},$$ since $f(n+2) > f(2)$. Finally using Lemma~\ref{lem:ine2} we have $$\sqrt{(m+n+2)^2 + 2^2} -\sqrt{(m+2)^2 + (n+2)^2} \ge 0$$ and hence the result follows.
\end{proof} 

Now we prove the main result of the article.

\begin{theorem}\label{thm:main}
	Let $G \in \mathcal{U}(N,k)$. Then $SO(\mathcal{G}) \ge SO(G)$ and equality holds if and only if $G \cong \mathcal{G}$.
\end{theorem}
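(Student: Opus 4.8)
The plan is to reduce an arbitrary $G \in \mathcal{U}(N,k)$ to $\mathcal{G}$ by a finite sequence of transformations, each of which keeps the graph inside $\mathcal{U}(N,k)$ and never decreases the Sombor index, and at least one of which strictly increases it whenever $G \not\cong \mathcal{G}$. Since $\mathcal{U}(N,k)$ is a finite set, this yields both $SO(\mathcal{G}) \ge SO(G)$ and the equality characterization. Throughout one checks that every move preserves $N$ and the pendant count $k$; this is routine, since none of the transformations below creates or destroys a vertex of degree one except by compensating simultaneously elsewhere.

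I would carry out the reduction in four stages. \textbf{(i) Clearing the trees.} Applying Lemmas~\ref{lem:tree1} and~\ref{lem:tree2}, together with Lemmas~\ref{lem:uni1},~\ref{lem:uni2} and Remark~\ref{rem:uni3}, I push the branch vertices of every tree hanging off the cycle toward the cycle and absorb them, each step strictly raising $SO$, until no vertex of degree $\ge 3$ lies off the cycle. The outcome is a cycle $C_l$ with pairwise disjoint paths attached at its vertices. \textbf{(ii) Shortening the cycle.} While $l \ge 4$, I apply Lemmas~\ref{lem:cycle1},~\ref{lem:cycle2} and Remark~\ref{rem:cycle3} to two path-bearing cycle vertices, contracting the cycle and strictly increasing $SO$, until $l = 3$. \textbf{(iii) Concentrating at one vertex.} On the resulting $C_3$ I apply Lemma~\ref{lem:C3} (at most twice, comparing the vertex of largest degree with each of the other two) to move all paths onto a single cycle vertex, strictly increasing $SO$ each time; the result lies in $U_3(N,k)$ with all $k$ paths attached at one vertex.

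\textbf{(iv) Fixing the path profile.} It remains to show that, among graphs in $U_3(N,k)$ whose $k$ paths are all attached at one vertex, $\mathcal{G}$ is the unique maximizer. The total length of the paths is $N-3$, distributed among $k$ paths each of length $\ge 1$. If two of them have length $\ge 2$, I first use Lemma~\ref{lem:U3} (which leaves $SO$ unchanged) to whittle one of them down to length exactly $2$, and then Lemma~\ref{lem:U4} (which strictly increases $SO$) to convert that length-$2$ path into a pendant edge while lengthening the other. Thus any configuration with at least two paths of length $\ge 2$ is strictly beaten, so a maximizer has at most one path of length $\ge 2$; since the lengths sum to $N-3$ this forces the profile $(N-k-2,1,\dots,1)$, which is exactly $\mathcal{G}$, and the maximizer is unique.

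The step I expect to be the main obstacle is stage (ii). Lemmas~\ref{lem:cycle1} and~\ref{lem:cycle2} require paths at two distinct cycle vertices, and they give no mechanism to shorten a cycle of length $\ge 4$ that carries all of its paths at a single vertex. This ``single-hub'' configuration therefore needs a separate argument, and it is genuinely delicate: the naive contraction (merging a degree-two cycle vertex and compensating by lengthening a path) does \emph{not} increase $SO$, as a direct computation with Lemmas~\ref{lem:f} and~\ref{lem:g} shows. Establishing that such graphs are nonetheless dominated by $\mathcal{G}$ — or identifying the precise regime in which they are — is the crux on which the completeness of the reduction depends, and is the part I would scrutinize first.
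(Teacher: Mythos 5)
Your four stages reproduce the paper's proof essentially verbatim: the paper also first reduces trees to paths via Lemmas~\ref{lem:tree1}, \ref{lem:tree2}, \ref{lem:uni1}, \ref{lem:uni2} and Remark~\ref{rem:uni3}, then invokes Lemmas~\ref{lem:cycle1}, \ref{lem:cycle2} and Remark~\ref{rem:cycle3} to conclude the cycle of a maximizer has length $3$, then applies Lemma~\ref{lem:C3} to concentrate all paths at one vertex, and finally settles the path profile with Lemmas~\ref{lem:U3} and \ref{lem:U4} exactly as in your stage (iv). The one place you diverge is the honest flag at the end: the paper passes over the ``single-hub'' configuration in silence, simply asserting that the cycle lemmas force length $3$. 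Since Lemmas~\ref{lem:cycle1}, \ref{lem:cycle2} and Remark~\ref{rem:cycle3} all require attachments at \emph{two distinct} cycle vertices, the extremality argument produces no contradiction when a cycle $C_l$ with $l \ge 4$ carries everything at one vertex, so the obstacle you identified is a genuine gap in the published proof, not merely in your sketch.

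Moreover, the gap cannot be closed in the form stated, because the uniqueness assertion of Theorem~\ref{thm:main} fails precisely there. For $l \ge 4$ let $H_l$ be the graph obtained from $C_l$ by attaching, at a single cycle vertex $u$ (so $d(u)=k+2$), $k-1$ pendant edges and one path on $p = N-l-k+1$ vertices with $p \ge 2$. Its multiset of edge degree pairs is $(k-1)$ copies of $(k+2,1)$, three copies of $(k+2,2)$, $N-k-3$ copies of $(2,2)$, and one copy of $(2,1)$ --- identical to that of $\mathcal{G}$, since deleting one cycle vertex removes one $(2,2)$-edge and lengthening a path of length $\ge 2$ restores it. As $SO$ depends only on this multiset, $SO(H_l) = SO(\mathcal{G})$ although $H_l \not\cong \mathcal{G}$; concretely, for $N=7$, $k=1$ the $C_3$-, $C_4$- and $C_5$-tadpoles all have $SO = 3\sqrt{13} + 3\sqrt{8} + \sqrt{5}$. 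Hence for $N \ge k+5$ the equality characterization is false, and no chain of strictly $SO$-increasing moves can eliminate $H_l$, because $H_l$ is itself a maximizer; this is exactly why your ``naive contraction'' computation yields zero gain. Only the inequality $SO(\mathcal{G}) \ge SO(G)$ can be salvaged, and proving it requires supplementing the toolbox with the equality-preserving unrolling move (contract one cycle edge, append one vertex to a path of length $\ge 2$) followed by your stage-(iv) analysis, together with a corrected equality statement listing the graphs $H_l$ alongside $\mathcal{G}$. Your instinct to scrutinize stage (ii) first was exactly right.
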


\begin{proof}
	Let $G \in \mathcal{U}(N,k)$ be a graph with maximal Sombor index. Using Lemmas~\ref{lem:uni1}, \ref{lem:uni2} and Remark~\ref{rem:uni3} we can conclude that $G$ is an unicyclic graph with $k$ paths attached to the vertices of the cycle in $G$. Next, using Lemma~\ref{lem:cycle1}, \ref{lem:cycle2} and Remark~\ref{rem:cycle3} we can conclude that the cycle in $G$ is of length $3$ with $k$ paths attaches to its vertices. Finally using Lemma~\ref{lem:C3} since $G$ is a maximal graph all $k$ paths must be attached to a single vertex of the cycle of length $3$. Thus $G \in U_3(N,k)$. 
	\begin{itemize}
	     \item[(a)] If $N = k+3$, then $G$ has only pendant edges attached to a single vertex of $C_3$ and $G \cong \mathcal{G}$..
	     \item[(b)] If $N = k+4$, then $G$ has $k-1$ pendant edges and a path of length $2$, all attached to a single vertex of $C_3$, \textit{i.e.} $G \cong \mathcal{G}$.
	     \item[(c)] If $N > k+4$ and $G$ has $k-1$ pendant edges and a path of length $N-k-2$ attached to a single vertex of $C_3$, then $G \cong \mathcal{G}$. Otherwise, $G$ has at least two paths of length $\ge 2$ then using Lemmas~\ref{lem:U3} and \ref{lem:U4} we have if $G$ is the maximal graph then $G \cong \mathcal{G}$.
	\end{itemize}
	Thus combining the above three cases we have the required result.
\end{proof}

\small{

}


\begin{thebibliography}{20}
\bibitem{Ali} S. Alikhani and N. Ghanbari, Sombor index of polymers, MATCH Commun. Math. Comput. Chem., 86 (2021), pp. 715-728.

\bibitem{Bapat}  R.B. Bapat,  Graphs and matrices. Second Edition, Hindustan Book Agency, New Delhi. 2014.

\bibitem{Cruz} R. Cruz, I. Gutman and J. Rada, Sombor index of chemical graphs, Appl. Math. Comput. 399 (2021), p. 126018.	

\bibitem{Das} K.C. Das, A.S. Çevik, I.N. Cangul and Y. Shang, On Sombor index, Sym. 13 (2021) 140.

\bibitem{Das1} K.C. Das and Y. Shang, Some Extremal Graphs with Respect to Sombor Index. Math. 2021, 9, 1202.	
	
\bibitem{Fili} S. Filipovski, Relations between Sombor index and some degree-based topological indices, Iranian J. Math. Chem., 12 (2021), pp. 19-26.

\bibitem{Gut1} I. Gutman, Geometric approach to degree-based topological indices: Sombor indices, MATCH Commun. Math. Comput. Chem., 86 (2021), pp. 11-16.

\bibitem{Gut2} I. Gutman, Some basic properties of Sombor indices, Open J. Discrete Appl. Math., 4 (2021), pp. 1-3.

\bibitem{Horo} B. Horoldagva and C. Xu, On Sombor index of graphs, MATCH Commun. Math. Comput. Chem., 86 (2021), pp. 703-713.

\bibitem{Red} I. Redžepović, Chemical applicability of Sombor indices, Serb. Chem. Soc., 86 (2021), pp. 445-457.

\bibitem{Reti} T. Réti, T. Došlić and A. Ali, On the Sombor index of graphs, Contrib. Math., 3 (2021), pp. 11-18.

\bibitem{Sen} B. Senthilkumar, Y. B. Venkatakrishnan, S. Balachandran, Akbar Ali, Tariq A. Alraqad and Amjad E. Hamza, On the Maximum Sombor Index of Unicyclic Graphs with a Fixed Girth, J. of Math., vol. 2022, Article ID 8202681, 8 pages, 2022. 

\bibitem{Sun} X. Sun and J. Du, On Sombor index of trees with fixed domination number, Appl. Math. Comput., 421 (2022), p. 126946.

\bibitem{Wang} Z. Wang, Y. Mao, Y. Li and B. Furtula, On relations between Sombor and other degree-based indices, J. Appl. Math. Comput. (2021). 

\bibitem{Zhou} T. Zhou, Z. Lin and L. Miao, The extremal Sombor index of trees and unicyclic graphs with given matching number, J. Disc. Math. Sci. and Crypt. (2022).

\bibitem{Zhou1} T. Zhou, Z. Lin and L. Miao, The Sombor index of trees and unicyclic graphs with given maximum degree, Disc. Math. Lett. 7 (2021) 24–29.



\end{thebibliography}
\end{document}